\newtheorem{thm}{Theorem}[section]
\newtheorem{cor}[thm]{Corollary}
\newtheorem{prop}[thm]{Proposition}
\newtheorem{lem}[thm]{Lemma}
\theoremstyle{definition}
\newtheorem{deff}[thm]{Definition}
\newtheorem{pr}[thm]{Proof}
\newcommand{\R}{\mathbb{R}}
\newcommand{\tr}{\mathrm{tr}}
\begin{document}

\title{{Trivializing number of positive knots}}
\author{Kazuhiko Inoue}
\address{Graduate School of Mathematics, Kyusyu University, 744, Motooka, Nishi-ku, Fukuoka, 819-0395, Japan}
\date{\today}

\maketitle

\begin{abstract}
In this paper, we give the trivializing number of all minimal diagrams of positive 2-bridge knots and study the relation between the trivializing number and the unknotting number for a part of these knots.
\end{abstract}

\section{Introduction}

The trivializing number is one of numerical invariants of knots as same as the unknotting number, which judges some complexity of a knot. In general, it is known that the former is equal or over than twice the latter. Furthermore, Hanaki has conjectured that the trivializing number of a positive knot is just twice the unknotting number of the same knot. Indeed, for any positive knot up to 10 crossings, the  equality such that $\tr(K) = 2u(K)$ holds, where $tr(K)$ is the trivializing number of a knot and $u(K)$ is the unknotting number of the same knot (see \cite{Ha}), and our result gives a partial positive answer of this conjecture. 

This paper is constructed as follows:\ In Section 2, we define the trivializing number of a diagram and the trivializing number of a knot. In Section 3, we shortly do a review of positive knot and 2-bridge knot. In Section 4, we determine the standard diagrams of positive 2-bridge knots, and in Section 5, we determine the trivializing number of minimal diagrams of positive 2-bridge knots. In section 6, we show that for some positive 2-bridge knots, the equation $tr(K) = 2u(K)$ holds. In Section 7, we introduce positive pretzel knots, and show that the equation above also holds for some of them.

\section{Preliminaries}

We work in the PL category. Throughout this paper, all knots are oriented. A  \emph{projection} of a knot $K$ in $\R^3$ is a regular projection image of $K$ in $\R^2\cup\{\infty\} = S^2$.  A \emph{diagram} of $K$ is a projection endowed with  over/under information for its double points. A \emph{crossing} is a double point with over/under information, and a \emph{pre-crossing} is a double point without over/under information. A \emph{pseudo-diagram} of $K$ is a projection of $K$ whose double points are either crossings or pre-crossings. See Figure \ref{fig:1-01ji}.

 \begin{figure}[htbp]
  \centering
  \includegraphics[width=10cm]{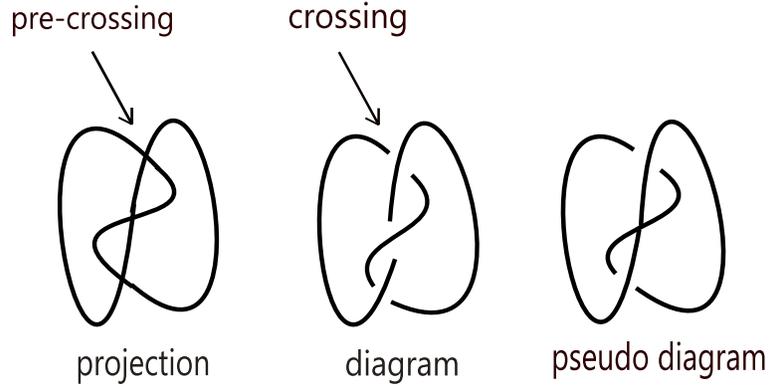}
  \caption{Projection, diagram, and pseudo-diagram}
  \label{fig:1-01ji}
 \end{figure}
 
A pseudo-diagram is said to be \emph{trivial} if we always get a diagram of a trivial knot after giving arbitrary over/under information to all the pre-crossings. An example is given in Figure \ref{fig:1-02ji}. It is known that we can change every projection into a trivial pseudo-diagram by giving appropriate over/under information to some of the pre-crossings.

 \begin{figure}[htbp]
 \centering
 \includegraphics[width=9cm]{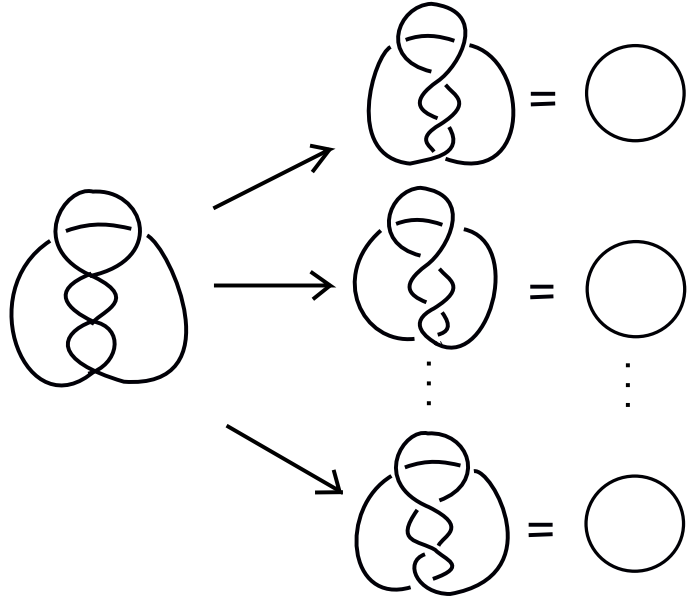}
 \caption{Example of a trivial pseudo-diagram}
 \label{fig:1-02ji}
 \end{figure}

\begin{deff}\label{def111}
The \emph{trivializing number} of a projection $P$, denoted by ${\tr}(P)$, is the minimal number of pre-crossings of $P$ which should be transformed into crossings for getting a trivial pseudo-diagram.
\end{deff}

\begin{deff}\label{def112}
The \emph{trivializing number} of a diagram $D$, denoted by ${\tr}(D)$, is by definition the trivializing number of the associated projection which is obtained from $D$ by ignoring the over/under information.
\end{deff}

For example in the case as shown in Figure \ref{fig:1-03ji}, we can get a trivial pseudo-diagram by transforming two pre-crossings $\textcircled{\scriptsize1}$ and $\textcircled{\scriptsize2}$ of $P$ into crossings; however, it can be easily checked that we cannot get a trivial pseudo-diagram by transforming only one pre-crossing of $P$ into a crossing. Therefore, we have $\tr(D)=\tr(P)=2$.

\begin{figure}[htbp]
  \centering
  \includegraphics[width=10cm]{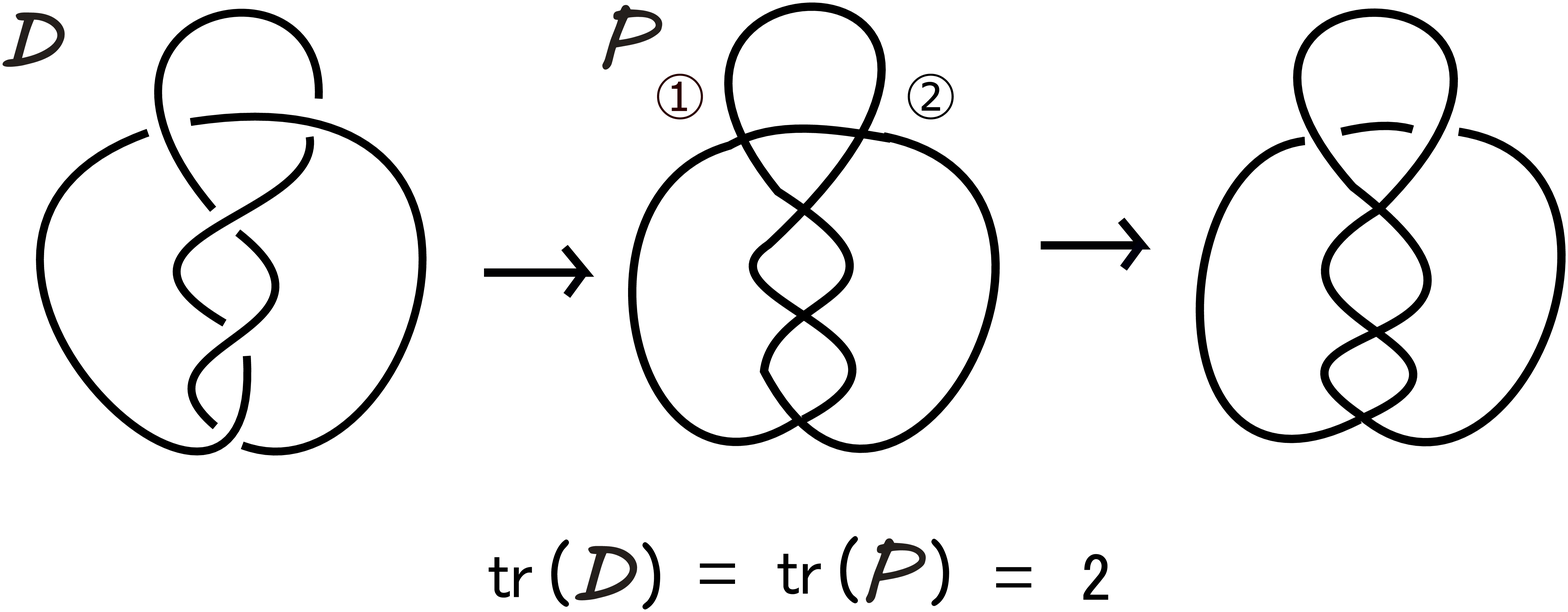}
  \caption{An operation for getting a trivial pseudo-diagram}
  \label{fig:1-03ji}
 \end{figure}

\begin{deff}\label{def113}
The \emph{trivializing number} of a knot $K$, denoted by ${\tr}(K)$, is the minimum of  ${\tr}(D)$, where the minimum is taken over all diagrams $D$ of $K$.
\end{deff}

\section{Positive 2-bridge knots}

Generally speaking, the trivializing number of a knot is not always realized by its minimal diagram (a diagram that has the minimal number of crossings); in fact we have counter examples (see  \cite{Ha}). Moreover, even for a given diagram, determining its trivializing number is not so easy in general. In Section 5, we give the trivializing numbers of all minimal diagrams of positive 2-bridge knots.

Let $D$ be an oriented diagram of a knot. To each of its crossings, we associate sign $+$ or $-$ as shown in Figure \ref{fig:2-01ji}(1). If all the crossings in $D$ have the same sign $+$ (resp.\ $-$), then we say that $D$ is a \emph{positive diagram} (resp.\ \emph{negative diagram}).

\begin{figure}[htbp]
 \centering
 \includegraphics[width=9cm,clip]{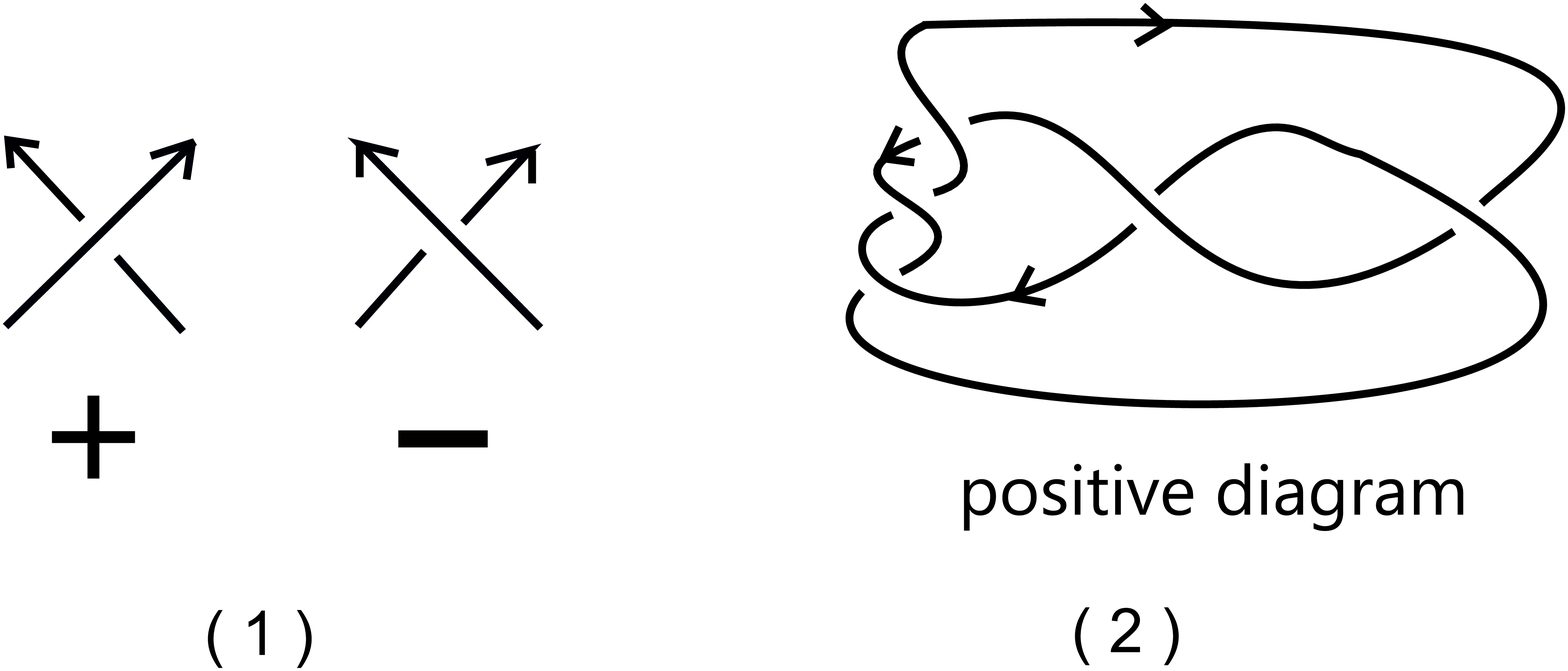}
 \caption{Sign of a crossing and an example of a positive diagram}
 \label{fig:2-01ji}
\end{figure}

When $D$ is a positive diagram, the mirror image of $D$, which is obtained by changing the over/under information of all crossings of $D$ and is denoted by $D^*$, is a negative diagram. Since $D$ and $D^*$ correspond to the same projection, we have $\tr(D) = \tr(D^*)$. A \emph{positive knot} is a knot which has a positive diagram.

For a finite sequence $a_1, a_2, \ldots, a_m$ of integers, let us consider the knot (or link) diagram $D(a_1, a_2, \ldots, a_m)$ as shown in Figure \ref{fig:2-02ji}. In the figure, a rectangle in the upper row (resp.\ lower row), depicted by double lines (resp.\ simple lines), with integer $a$ represents $a$ left-hand (resp.\ right-hand) horizontal half-twists if $a \geq 0$, and $|a|$ right-hand (resp.\ left-hand) horizontal half-twists if $a < 0$. See Figure \ref{fig:2-03ji} for some explicit examples. We say a rectangle in the upper row (resp.\ lower row), an upper rectangle (resp.\ a lower rectangle) for short. A knot which is represented by such a diagram is called a \emph{$2$-bridge knot}.

\begin{figure}[htbp]
 \centering
 \includegraphics[width=10cm, clip]{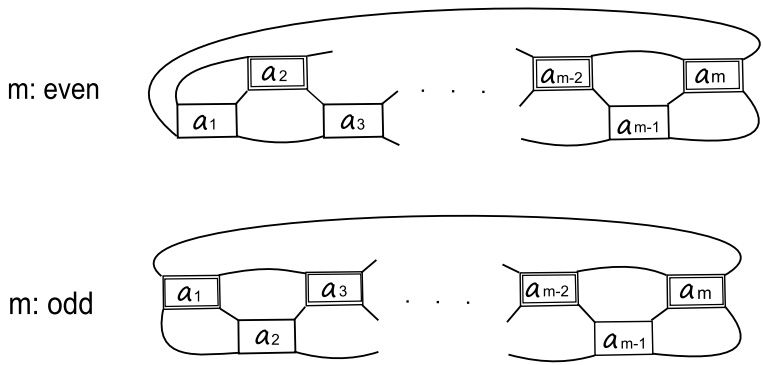}
 \caption{2-bridge knot diagrams}
 \label{fig:2-02ji}
\end{figure}

\begin{figure}[htbp]
 \centering
 \includegraphics[width=12cm, clip]{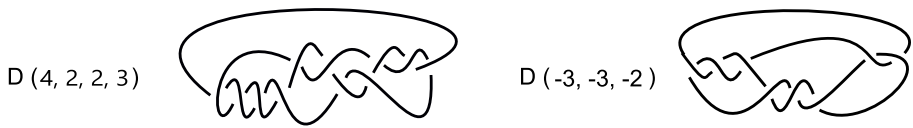}
 \caption{Examples of 2-bridge knot diagrams}
 \label{fig:2-03ji}
\end{figure}
 
If  $a_i > 0$ for all $i$ with $1 \leq i \leq m$ or if $a_i < 0$ for all $i$, then the diagram $D(a_1, a_2, \ldots, a_m)$ is reduced and alternating, and hence is a minimal diagram (see \cite{Mu}). We call such a diagram a \emph{standard diagram} of the knot.

It is known that every $2$-bridge knot has a unique standard diagram (see, for example, \cite{Mu}). Therefore, a positive (resp.\ negative) $2$-bridge knot is a positive (resp.\ negative) alternating knot. A positive alternating knot may not necessarily have a diagram which is both positive and alternating in general. However,  Nakamura has shown the following.

\begin{thm}[Nakamura \cite{Na}] \label{thm114}

 A reduced alternating diagram of a positive alternating knot is positive.

\end{thm}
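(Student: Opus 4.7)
The plan is to reduce positivity of every reduced alternating diagram to a single numerical identity. By the Tait conjecture proved by Kauffman, Murasugi, and Thistlethwaite, any two reduced alternating diagrams of a knot $K$ have the same crossing number $c(K)$ and the same writhe $w(K)$. If such a diagram $D$ has $p$ positive and $n$ negative crossings, then $p+n=c(K)$ and $p-n=w(K)$; hence $D$ is positive if and only if $w(K)=c(K)$. So it suffices to exhibit one reduced alternating diagram of $K$ whose writhe equals its crossing number, and the Tait writhe invariance will then propagate positivity to every other reduced alternating diagram.

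To obtain such a diagram, I would start from a positive diagram $D^+$, which exists by the hypothesis that $K$ is positive, and try to modify it into a reduced alternating diagram while keeping every crossing positive. A positive diagram trivially satisfies $w(D^+)=c(D^+)$, but $D^+$ need not be alternating or minimal. Two external tools look essential here. First, Cromwell's theorem states that Seifert's algorithm on a positive diagram yields a minimum genus Seifert surface of $K$; combined with the Crowell--Murasugi genus formula for alternating knots, this relates $c(D^+)$ and the Seifert circle count $s(D^+)$ to the corresponding data of any reduced alternating diagram $D$. Second, the Morton--Franks--Williams inequality for the HOMFLY polynomial is saturated by positive diagrams, pinning down the extremal $v$-degrees in terms of $c(D^+)-s(D^+)+1$, while for alternating knots the same extremal $v$-degrees are determined by a reduced alternating diagram; matching these two descriptions is designed to force $w(D)=c(D)$.

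The step I expect to be the main obstacle is precisely this matching: converting the coincidence of invariants computed on a positive diagram and on a reduced alternating diagram into the strict equality $w(D)=c(D)$. The cleanest route is probably to construct a positive reduced alternating diagram directly, by a controlled sequence of flypes and crossing-reducing Reidemeister moves that visibly preserve the sign of every surviving crossing, so that the resulting diagram is simultaneously positive, alternating, and minimal. Once any one such diagram is in hand, the Tait conjecture finishes the argument by transferring its writhe to every reduced alternating diagram of $K$.
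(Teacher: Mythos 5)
This statement is imported from Nakamura's paper \cite{Na}; the present paper offers no proof of it, so there is nothing internal to compare your argument against. Judged on its own terms, your proposal contains a genuine gap at exactly the point you flag. The opening reduction is correct and is a real simplification: writing $p-n=w(D)$ and $p+n=c(D)$, a reduced alternating diagram $D$ is positive if and only if $w(D)=c(D)$, and since the proved Tait conjectures make both $w$ and $c$ common to all reduced alternating diagrams of $K$, it suffices to produce one such diagram with $w=c$. But that remaining step \emph{is} the theorem, and neither of your two routes delivers it. The invariant-matching route combines Cromwell's minimal-genus result for positive diagrams with the Crowell--Murasugi result for alternating diagrams, which only yields $c(D^{+})-s(D^{+})=c(D)-s(D)$, a statement with no writhe content; and the Morton--Franks--Williams lower bound $1-s(D)+w(D)\leq \min\deg_{v}P_{K}=c(D)-s(D)+1$ gives $w(D)\leq c(D)$, which is the trivially true direction (every diagram satisfies it). To exclude negative crossings you need the opposite inequality $w(D)\geq c(D)$, which requires a sharp computation of an \emph{upper} $v$- (or Jones-) degree for reduced alternating diagrams matched against a lower bound coming from positivity; you neither identify which sharpness statement you would invoke nor verify that it is actually an equality for alternating links (MFW is not sharp for all of them). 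This is precisely where the substance of Nakamura's argument lives.

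Your fallback route is worse: asserting that a positive diagram can be converted into a reduced alternating diagram by flypes and crossing-reducing Reidemeister moves ``that visibly preserve the sign of every surviving crossing'' assumes the conclusion. There is no general procedure that alternates a diagram while controlling the sign of each crossing, and if such a sequence of moves existed and were exhibited, the theorem would follow immediately without any of the polynomial machinery. As written, the proposal is a plausible research plan whose two candidate endgames are, respectively, an inequality pointing the wrong way and a restatement of the goal.
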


By the theorem abobe, the standard diagram of a positive 2-bridge knot is necessarily positive.

 In order to study the trivializing number of the standard diagram $D$ of a positive or negative 2-bridge knot, by taking the mirror image, we may assume $a_i > 0$ for all $i$. Note that a positive knot may turn into a negative one by this operation. 
 
\section{standard diagrams of positive 2-bridge knots}
 
In this section, we determine the standard diagrams of positive 2-bridge knots.

\begin{prop}\label{prop116}

Let $D = D(a_1, a_2, \ldots , a_m)$ be a positive diagram or a negative diagram of a $2$-bridge knot such that $a_i > 0$ for all $i$ with $1 \leq i \leq m$. Then $D$ must be one of the following forms.

\begin{enumerate}
 \item When $m$ is even, say $m = 2n$, we have either
  \begin{enumerate}
  \item[(a1)] $a_{2i}$ is even for $1 \leq i \leq n-1$,
  \item[(a2)] $a_{2n}$ is odd, and
  \item[(a3)] $\sum_{i=1}^{n} a_{2i-1}$ is even,
  \end{enumerate}
  or    
  \begin{enumerate}
  \item[(b1)] $a_1$ is odd,
  \item[(b2)] $a_{2i-1}$  is even for $2 \leq i \leq n$, and
  \item[(b3)] $\sum_{i=1}^{n} a_{2i}$ is even.
  \end{enumerate}
  
 \item When $m$ is odd, say $m = 2n + 1$, we have either
  \begin{enumerate}
  \item[(a1)] $a_{2i-1}$ is even for $2 \leq i \leq n$,
  \item[(a2)] $a_1$ and $a_{2n+1}$ are odd, and
  \item[(a3)] $\sum_{i=1}^{n} a_{2i}$ is odd,
  \end{enumerate}
  or    
  \begin{enumerate}
  \item[(b1)] $a_{2i}$ is even for $1\leq i \leq n$, and
  \item[(b2)] $\sum_{i=1}^{n+1} a_{2i-1}$ is odd.
  \end{enumerate}

\end{enumerate}

\end{prop}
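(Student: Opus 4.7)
The plan is to fix an orientation on $D = D(a_1,\dots,a_m)$, viewed as a $4$-plat, and determine which parity conditions on the $a_i$ are forced by the requirement that every crossing have the same sign. The key local observation is that in a single twist box with $a_i > 0$ half-twists, the $a_i$ crossings all receive the same sign, and this common sign is determined by two pieces of data: whether the box is upper or lower, and whether the two strands entering the box are parallel or anti-parallel. Thus, for $D$ to be positive (respectively negative), the two strands entering each box must be in the appropriate relative orientation dictated by its type.

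The second ingredient is a propagation rule. The two strands inside a box with $a_i$ twists exit with the same relative orientation as they entered (parallel/anti-parallel is preserved), but their \emph{positions} are swapped precisely when $a_i$ is odd. Since upper and lower boxes act on different pairs of strands in the $4$-plat, a positional swap at an upper box changes which of the four strands then feeds into the next lower box, and vice versa. Starting from the orientations forced at the closure arcs of the $4$-plat, I would iterate this rule along $a_1, a_2, \dots, a_m$ and record at each step the relative orientation of the two strands entering the next box.

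Requiring that every one of these relative orientations be the correct one, so that all crossings share the common desired sign, translates directly into parity conditions on the individual $a_i$. The remaining condition, that the resulting permutation of the four strands close up to a single knot component with a consistent global orientation, supplies the sum-parity constraint ($\sum a_{2i-1}$ even, $\sum a_{2i}$ even, etc.). Splitting into cases according to whether $m$ is even or odd, and within each case according to the two possible choices of global orientation for the $4$-plat, produces exactly the two disjunctive possibilities (a) and (b) listed in the proposition.

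The step I expect to be the main obstacle is the bookkeeping at the ends of the diagram: the closure of the $4$-plat forces specific orientations on the strands entering $a_1$ and exiting $a_m$, and these boundary effects are what distinguish, for example, the special role played by $a_{2n}$ in case $(1)(\mathrm{a})$ from that of $a_1$ in case $(1)(\mathrm{b})$, and likewise the simultaneous oddness of $a_1$ and $a_{2n+1}$ in case $(2)(\mathrm{a})$. Once the end conventions are correctly set up, the interior of the argument is a mechanical parity computation propagated along the twist sequence.
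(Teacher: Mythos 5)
Your overall strategy coincides with the paper's: fix an orientation, propagate it through the twist boxes using the rule that a box with $a_i$ odd swaps the two strand positions while preserving the parallel/anti-parallel relation, read off the parity constraints on the individual $a_i$ from the requirement that every box sees the relative orientation forcing the common crossing sign, and extract the sum-parity condition from the requirement that the diagram closes up to a single knot component. That is exactly how the paper derives the conditions (a1)--(a3) in each case.

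There is, however, one genuine flaw in how you organize the cases: you attribute the dichotomy between alternatives (a) and (b) to ``the two possible choices of global orientation for the $4$-plat.'' That cannot work. The sign of a crossing is invariant under reversing the orientation of every strand of a knot, so the two orientations of $D$ impose identical sign constraints; following your plan literally you would derive alternative (a) twice and never produce alternative (b). The actual source of the dichotomy is the hypothesis that $D$ is a positive diagram \emph{or} a negative diagram: with the normalization $a_i>0$, the all-positive case yields alternative (a) and the all-negative case yields alternative (b). (The paper obtains the (b)-conditions from the (a)-conditions by rotating the all-positive picture by $180$ degrees and passing to the mirror image.) Once you replace ``choice of orientation'' by ``choice of common crossing sign'' as the second branching parameter, the rest of your plan, including the boundary bookkeeping at the plat closure that singles out $a_{2n}$ in case (1)(a) and $a_1$, $a_{2n+1}$ in case (2)(a), goes through as in the paper.
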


Let us consider a rectangle with integer $a_i > 0$, as in Figure \ref{fig:2-02ji}, which corresponds to $a_i$ left-hand (resp.\ right-hand) half-twists if it is in the upper (resp.\ lower) row. In the following, such a rectangle will sometimes be denoted by $(a_i)$. If its crossings all have the same sign $+$, then the orientation of the two arcs are of a form as in Figure \ref{fig:3-01ji}. If the crossings all have sign $-$, then they are of a form as in Figure \ref{fig:3-02ji}. Furthermore, we adopt the symbolic convention as depicted in Figure \ref{fig:3-02-01ji}.

\begin{figure}[htbp]
 \centering
 \includegraphics[width=12cm,clip]{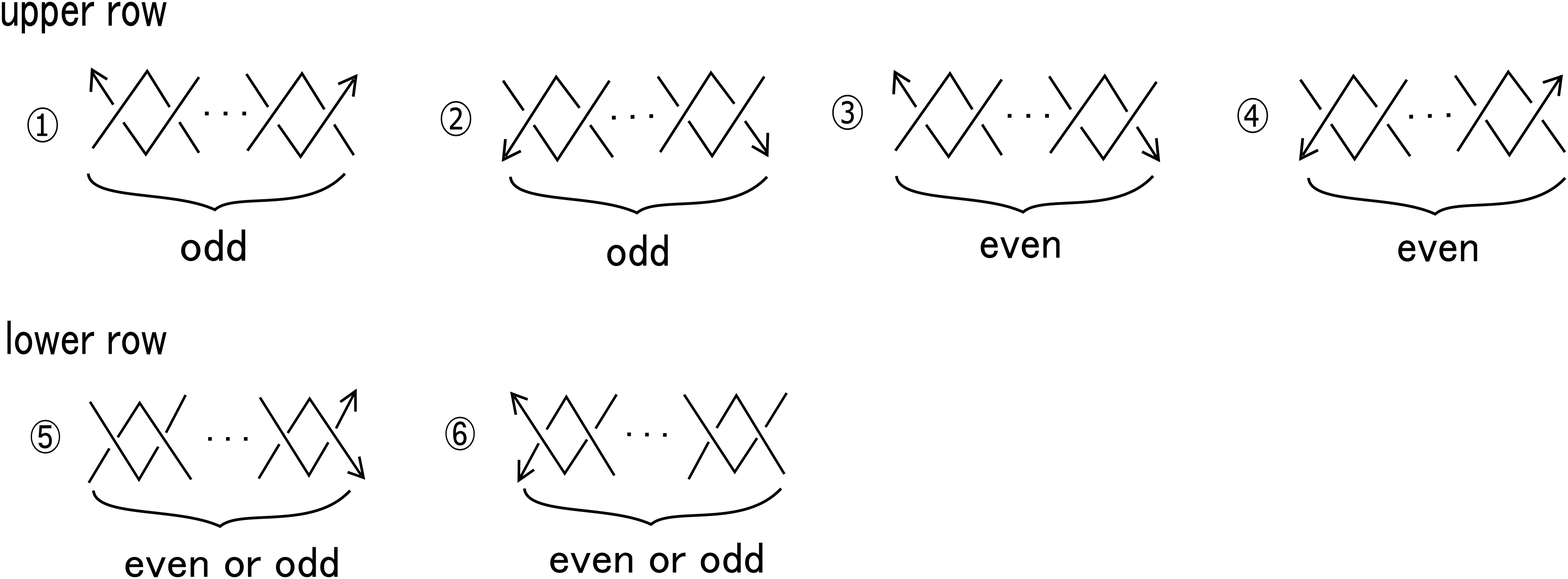}
 \caption{The orientations of two arcs with positive crossings}
 \label{fig:3-01ji}
 \end{figure}

\begin{figure}[htbp]
 \centering
 \includegraphics[width=12cm,clip]{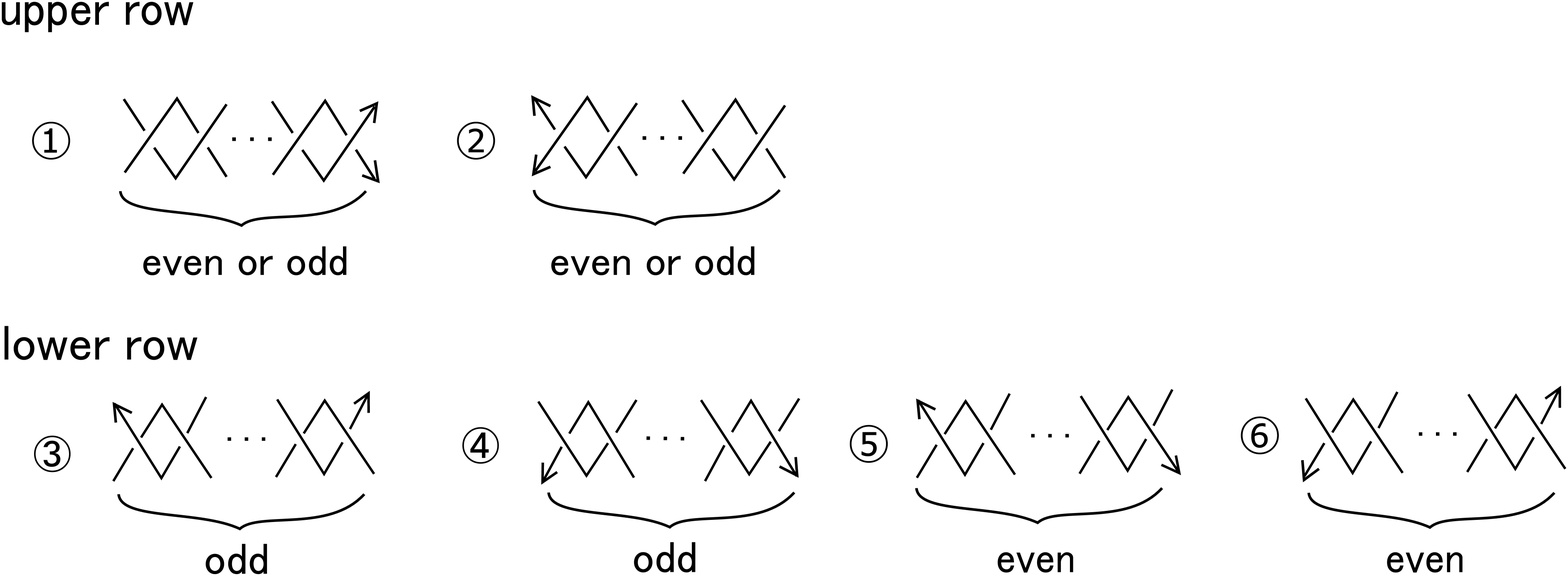}
 \caption{The orientations of two arcs with negative crossings}
 \label{fig:3-02ji}
\end{figure}

\begin{figure}[htbp]
 \centering
 \includegraphics[width=9cm,clip]{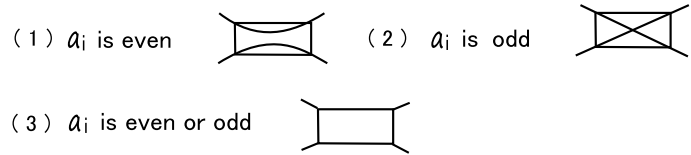}
 \caption{The symbolic convention}
 \label{fig:3-02-01ji}
\end{figure}

\begin{proof}[Proof of Proposition~ \textup{\ref{prop116}}]

(1) We may assume that the orientation of the diagram is as depicted in Figure \ref{fig:3-03ji}, since it is a diagram of a knot.

\begin{figure}[htbp]
 \centering
 \includegraphics[width=8cm,clip]{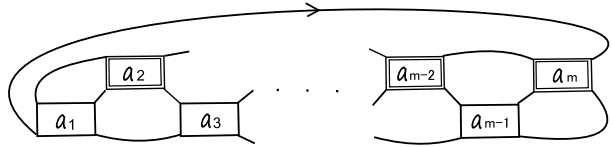}
 \caption{Orientation of diagram $D = D(a_1, a_2, \ldots, a_m)$, $m$ is even.}
 \label{fig:3-03ji}
\end{figure}

When the crossings in $(a_i)$ with $1 \leq i \leq 2n$ all have the same sign $+$, the orientations of the two arcs of $(a_{2n})$ are as shown in Figure \ref{fig:3-04ji} $\textcircled{\scriptsize1}$ or $\textcircled{\scriptsize2}$. Then the orientation of the diagram is as shown in Figure \ref{fig:3-04ji} $\textcircled{\scriptsize4}$. Since the orientations of the arcs of $(a_{2n-1})$ must be as shown in Figure \ref{fig:3-04ji} $\textcircled{\scriptsize3}$, the orientations of the arcs of $(a_{2n})$ must be as in Figure \ref{fig:3-04ji} $\textcircled{\scriptsize1}$. In particular, $a_{2n}$ is necessarily odd. Due to the orientation of $(a_{2n-1})$, we see that the orientation of the other $a_{2i-1}$, $1 \leq i \leq n$ are of the form as in Figure \ref{fig:3-04ji} $\textcircled{\scriptsize3}$. Furthermore, by chasing the oriented arcs, we can determine the orientations of all arcs in the remaining rectangles. Hence, the oriented diagram is as depicted in Figure \ref{fig:3-05ji}.

\begin{figure}[htbp]
 \centering
 \includegraphics[width=8cm,clip]{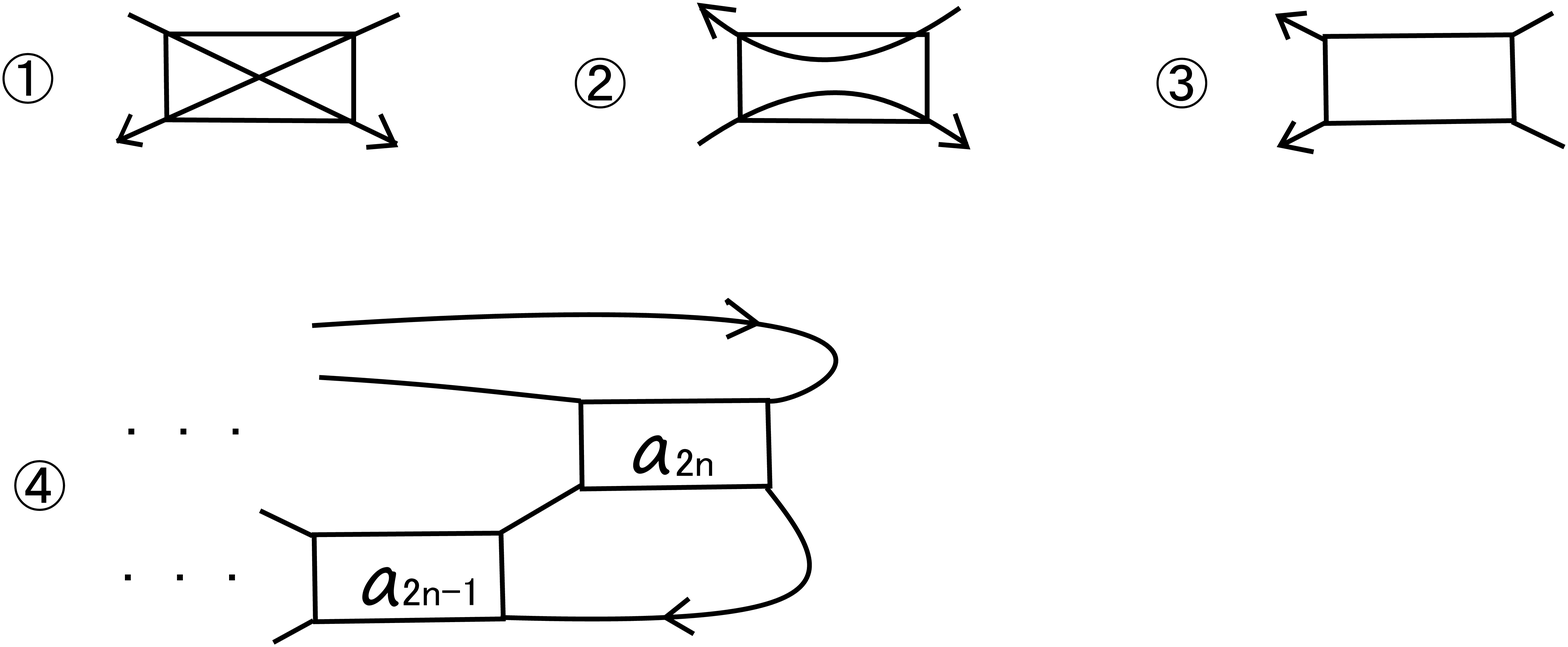}
 \caption{The orientation of arcs of rectangles}
 \label{fig:3-04ji}
\end{figure}

\begin{figure}[htbp]
 \centering
 \includegraphics[width=10cm,clip]{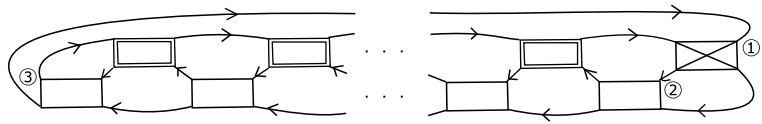}
 \caption{Orientations of arcs in $D$}
 \label{fig:3-05ji}
\end{figure}

Then, we see that $a_{2i}$, $1 \leq i \leq n-1$, are all even. Since this is a knot diagram, the oriented strand passing through $\textcircled{\scriptsize1}$ and then $\textcircled{\scriptsize2}$ needs to pass through $\textcircled{\scriptsize3}$. Therefore, $\sum_{i=1}^n a_{2i-1}$ must necessarily be even. This shows that the conditions $(a1)$, $(a2)$ and $(a3)$ are satisfied.

On the other hand, when the signs of crossings in $(a_i)$ are all $-$, by turning the diagram as shown in Figure \ref{fig:3-05-01ji}(1) on the plane by 180 degrees, and by reversing orientaitions of all arcs, we can get the mirror image of the diagram with the sign $+$ as shown in Figure \ref{fig:3-05-01ji}(2). Consequently, we know that the conditions $(b1)$, $(b2)$ and $(b3)$ are satisfied.

\begin{figure}[htbp]
 \centering
 \includegraphics[width=9cm,clip]{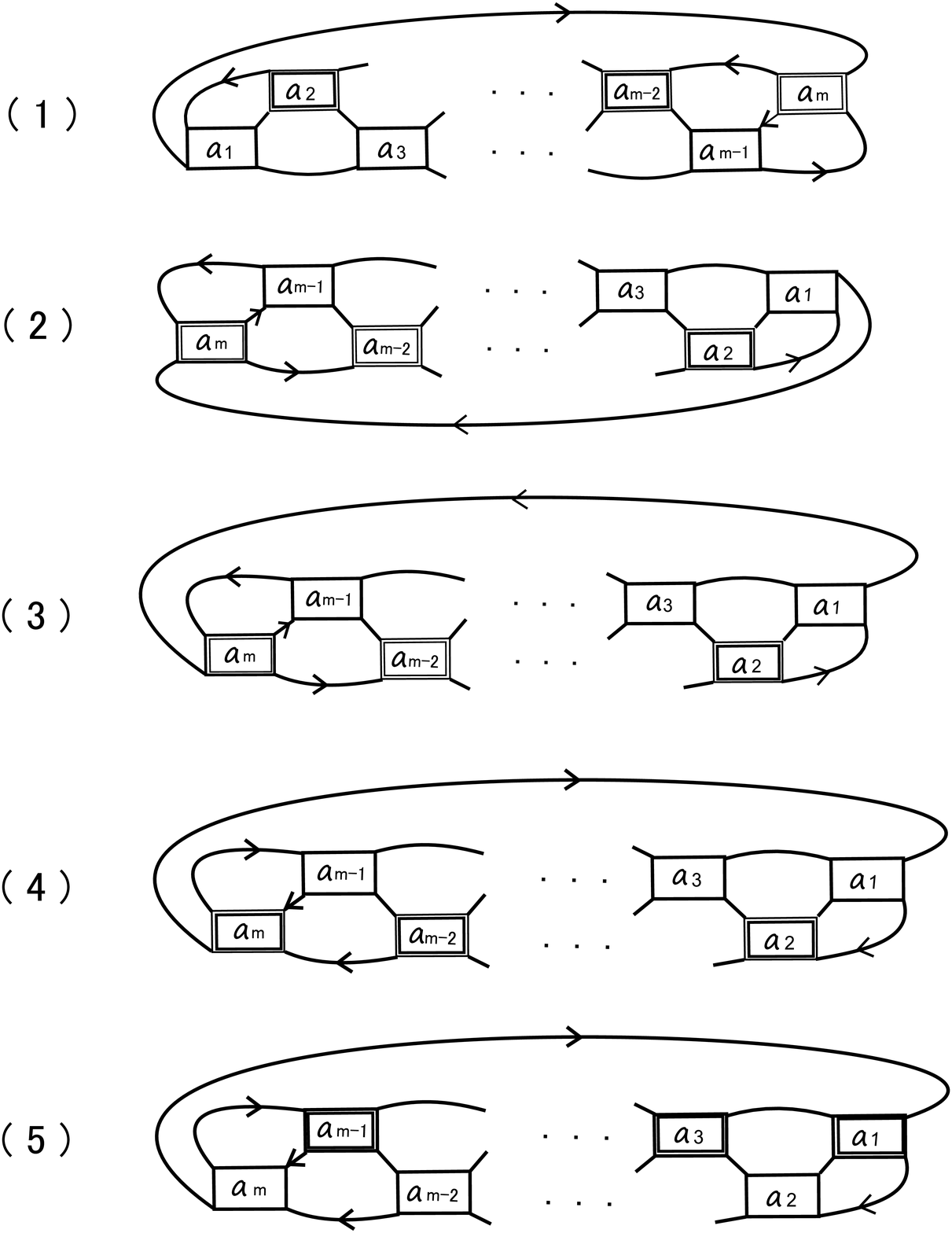}
 \caption{Orientations of arcs in $D$ with all signs $-$}
 \label{fig:3-05-01ji}
\end{figure}

(2)When all crossings in $(a_i)$ with $1 \leq i \leq 2n + 1$ have the same sign $+$, we can consider in a similar way to  the case where $m$ is even. We may assume that the orientation of the diagram is as shown in Figure \ref{fig:3-06ji}.

\begin{figure}[htbp]
 \centering
 \includegraphics[width=8cm,clip]{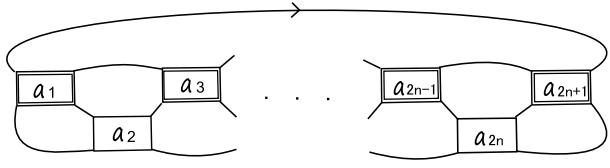}
 \caption{Orientation of diagram $D = D(a_1, a_2, \ldots, a_m)$, $m$ is odd,}
 \label{fig:3-06ji}
\end{figure}

Since the orientations of the two arcs of $(a_{2n+1})$ are as shown in Figure \ref{fig:3-04-01ji} $\textcircled{\scriptsize1}$ or $\textcircled{\scriptsize2}$, the orientation of the diagram is like as Figure \ref{fig:3-04-01ji} $\textcircled{\scriptsize4}$. Furthermore,  the orientation of the arcs of $(a_{2n})$ is as shown Figure \ref{fig:3-04-01ji} $\textcircled{\scriptsize3}$, the orientation of the arcs of $(a_{2n+1})$ must be as shown in Figure \ref{fig:3-04-01ji} $\textcircled{\scriptsize1}$. In particular,  $a_{2n+1}$ is necessarily odd. Due to the orientation of $(a_{2n})$, we see that the orientations other $(a_{2i})$, $1 \leq i \leq n-1$ are of the form as in Figure \ref{fig:3-04-01ji} $\textcircled{\scriptsize3}$. 

\begin{figure}[htbp]
 \centering
 \includegraphics[width=8cm,clip]{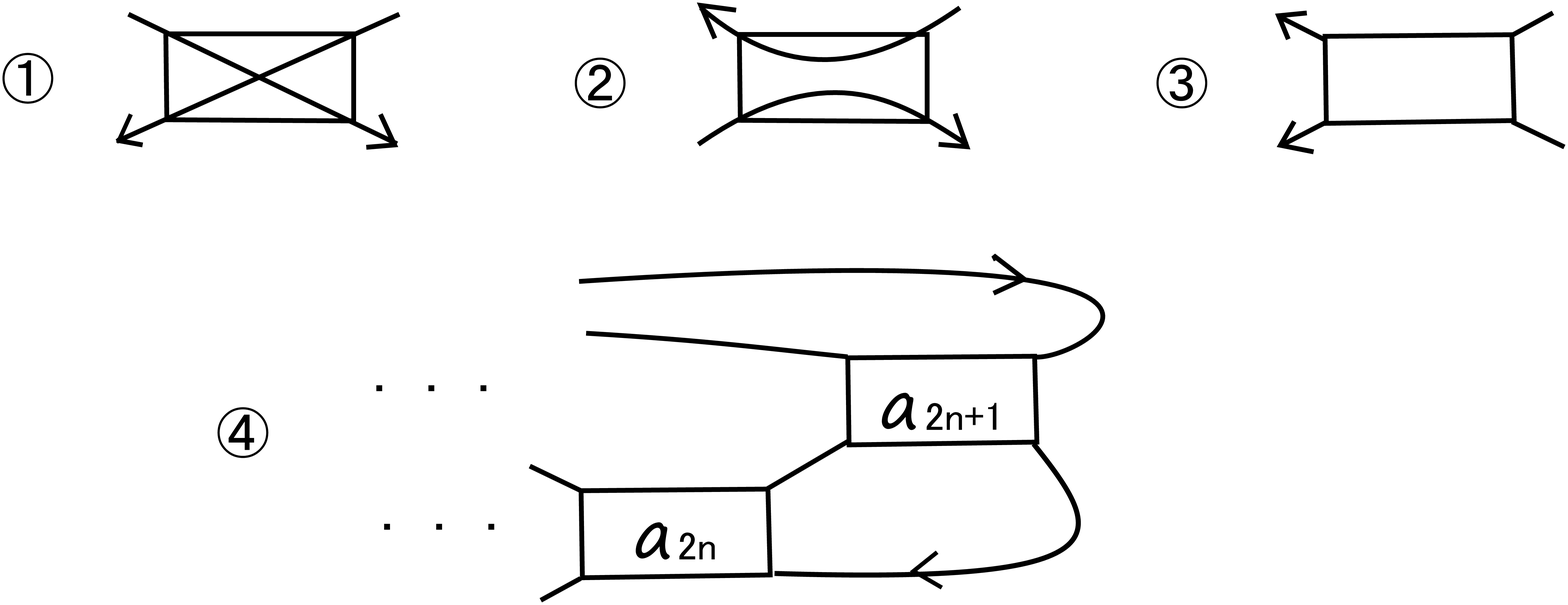}
 \caption{ }
 \label{fig:3-04-01ji}
\end{figure}

Hence, the oriented diagram is as depicted in Figure \ref{fig:3-08ji}. This shows that the conditions $(a1)$, $(a2)$ and $(a3)$ are satisfied. 

\begin{figure}[htbp]
 \centering
 \includegraphics[width=10cm,clip]{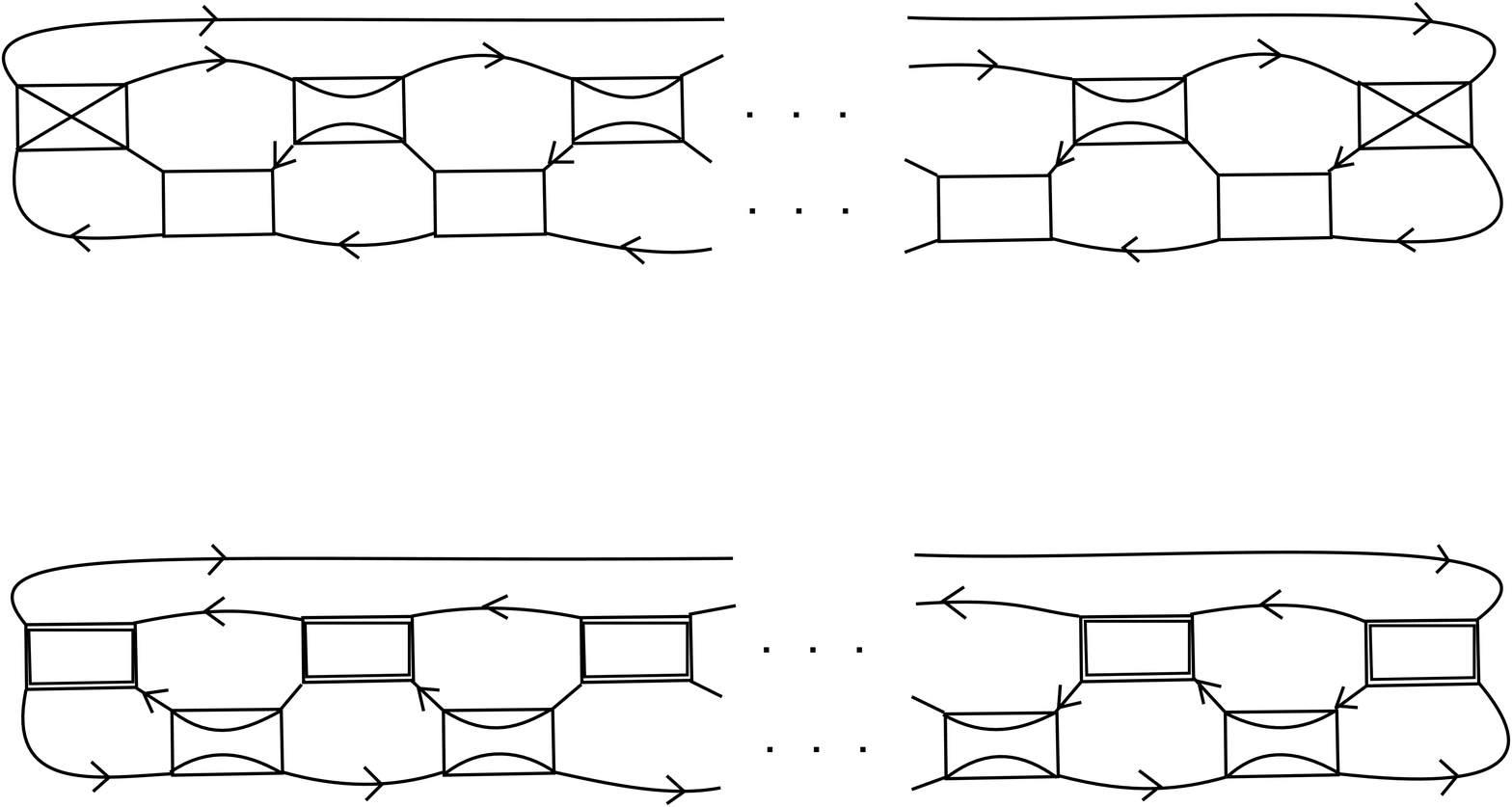}
 \caption{The orientation of a positive diagram $D = D(a_1, a_2, \ldots, a_m)$, $m$ is odd}
 \label{fig:3-08ji}
\end{figure}

If the signs of the crossings in $(a_i)$ are all negative, then the orientation of $a_{2n+1}$ is as shown in Figure \ref{fig:3-08-01ji} $\textcircled{\scriptsize1}$. Therefore, the orientations of the other $a_{2i+1}$ $1geq i \geq n-1$ must be of the same form as shown in Figure \ref{fig:3-08-01ji} $\textcircled{\scriptsize2}$. Thus the diagram is naturally as shown in Figure \ref{fig:3-08-01ji} $\textcircled{\scriptsize3}$, and we can easily see that the conditions $(b1)$ and $(b2)$ are satisfied. This completes the proof.

\end{proof}

\begin{figure}[htbp]
 \centering
 \includegraphics[width=10cm,clip]{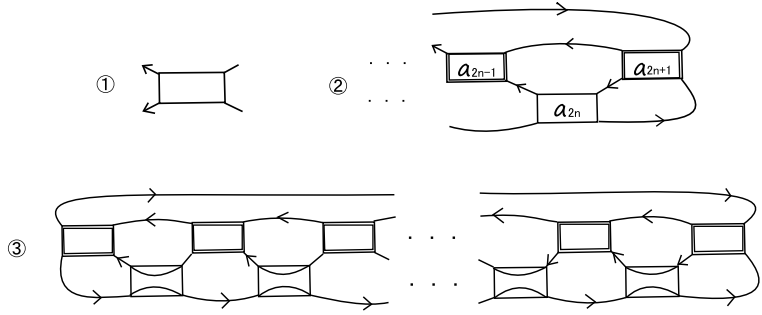}
 \caption{The orientation of a negative diagram $D = D(a_1, a_2, \ldots, a_m)$, $m$ is odd}
 \label{fig:3-08-01ji}
\end{figure}

Beside, we classify these diagrams into four types, that is type of 1a, type of 1b, type of 2a, and type of 2b. Remark that these four types correspond not only to the proposition above but also to Main Theorem. 

\section{Main Theorem}

For determining the trivializing number of a diagram, we can make use of the \emph{chord diagram}. Let $P$ be a projection with $n$ pre-crossings. A \emph{chord diagram} of $P$, denoted by $CD_P$, is a circle with $n$ chords marked on it by dotted line segment where the preimage of each pre-crossing is connected by a chord (see \cite{Ha}). We provide an example of the chord diagram as shown in Figure \ref{fig:3-09ji}.

A chord diagram is said to be parallel if all the chords in it have no intersection. For example, the rightmost chord diagram in Figure \ref{fig:3-09ji} is made of the chords which correspond to the crossings 1, 2, 3, and is parallel.

\begin{figure}[htbp]
 \centering
 \includegraphics[width=9cm,clip]{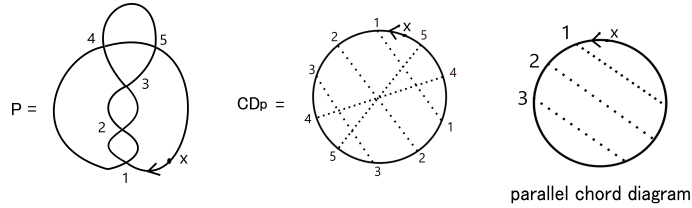}
 \caption{A chord diagram}
 \label{fig:3-09ji}
\end{figure}

In the situation above, next theorem holds.

\begin{thm}[Hanaki \cite{Ha}] \label{thm117}
 
 For a chord diagram, the following holds.
 \begin{itemize}

 \item The number of chords which are taken away from a chord diagram is even.

 \item $tr(D)$ = min \{the number of chords which must be taken away from a chord
 diagram in order to get a parallel chord diagram\}

 \end{itemize}

\end{thm}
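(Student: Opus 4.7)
The plan is to prove this by first establishing a key characterization: a projection $P$ is trivial (in the sense that every diagram obtained by filling in over/under data represents the unknot) if and only if its chord diagram $CD_P$ is parallel. Once this bridge between projections and chord diagrams is in place, both bullet points follow by translation.

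I would prove ``$CD_P$ parallel $\Rightarrow$ $P$ trivial'' by induction on the number of chords. When $CD_P$ is parallel and nonempty, some chord is innermost: its two endpoints are consecutive on the boundary circle with no other chord-endpoint between them. Back in $P$, this corresponds to a small loop meeting the rest of the projection at a single double point of nugatory type. Whatever over/under assignment we make at this double point, the loop can be removed by a Reidemeister I move, and the projection that remains still has a parallel chord diagram with one fewer chord; the inductive hypothesis finishes the argument. For the converse, if two chords in $CD_P$ cross, then the two corresponding double points interleave along the underlying closed curve, so the two-crossing sub-configuration forms a clasp. Choosing over/under suitably at just these two double points, and completing the remaining double points so that they contribute a descending (hence trivializable) configuration, yields a nontrivial diagram containing a genuine clasp, showing $P$ is not trivial.

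Given this characterization, the second bullet follows directly: fixing the over/under at a pre-crossing removes that chord from consideration for the remaining pseudo-diagram's triviality, so by Definition~\ref{def111} and Definition~\ref{def112}, $\tr(D) = \tr(P)$ equals the minimum number of chords whose removal from $CD_P$ leaves a parallel chord diagram.

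The first bullet, the parity statement, should be the most delicate step. The input is the classical Gauss realizability condition for the chord diagram of a planar closed curve: every chord intersects an even number of other chords. On the interlacement graph $G$ of $CD_P$ (vertices $=$ chords, edges $=$ pairs of crossing chords), this says every vertex has even degree. A set $S$ of chords whose removal makes $CD_P$ parallel is exactly a vertex cover of $G$ whose complement is independent, and a careful double-count of edges incident to $S$ combined with the even-degree condition forces $|S|$ to be even. Making this parity count genuinely rigorous (ruling out odd counterexamples that can arise purely graph-theoretically, such as an independent-set complement of size $3$ in a $C_5$, which the Gauss realizability condition must exclude) is the main obstacle; I would lean on Hanaki's original argument in \cite{Ha} for the precise form.
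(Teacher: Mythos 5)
First, a point of comparison: the paper contains no proof of this statement at all --- Theorem \ref{thm117} is imported verbatim from Hanaki \cite{Ha} as an external input, so there is no in-paper argument to measure yours against. Your overall architecture (characterize trivial pseudo-diagrams by parallel chord diagrams, then translate both bullets) is the standard one, and your forward direction is sound: in a nonempty parallel chord diagram an innermost chord exists (take a chord cutting off a minimal arc), it corresponds to an embedded loop meeting the rest of the projection only at its own double point, and a Reidemeister I move removes it whatever over/under data is assigned, so induction applies.

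The gaps occur exactly where the theorem is hard. (1) The converse of your characterization is the crux, and ``choose over/under to form a genuine clasp and complete the rest descendingly, yielding a nontrivial diagram'' is not a proof: a diagram containing a clasp can perfectly well be an unknot (the rest of the diagram can undo it), and a descending completion is only defined relative to a basepoint and tends to trivialize everything it touches, including your two chosen crossings. The step you need is Taniyama's theorem that a projection with two interlaced double points admits crossing data realizing a trefoil or figure-eight knot, which is a genuine induction on the number of double points, not a local clasp argument. The same difficulty recurs in your one-sentence translation to the second bullet: a trivial pseudo-diagram carries \emph{fixed} data on the resolved set $S$, so you must show both that when $CD_P\setminus S$ is parallel \emph{some} choice on $S$ makes every completion trivial (the $S$-crossings still interact with the unresolved ones), and that when $CD_P\setminus S$ is not parallel \emph{no} choice on $S$ does; neither follows formally from the all-pre-crossings case. (2) For the parity bullet you correctly observe that even interlacement degree alone cannot suffice (your $C_5$ example has all degrees even yet minimum vertex cover of size $3$), but you then defer the missing ingredient to \cite{Ha}; ruling out such configurations requires the full Gauss realizability conditions (e.g.\ the Lov\'asz--Rosenstiehl parity conditions, which do exclude $C_5$ as an interlacement graph), and without that supplied the parity claim is unproven. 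As written, the proposal establishes only the easy half of the characterization and leaves the two substantive assertions of the theorem open.
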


We consider sub chord diagram correponding to each $(a_i)$.

\begin{lem}\label{lem119}

Let ${SC}_{a_i}$ be the sub-chord diagram corresponding to the rectangle $(a_i)$.

\begin{enumerate}
 \item If two arcs of $(a_i)$ enter from the same side, $($left or right$)$, as shown in Figure \ref{fig:4-01ji} $\textcircled{\scriptsize1}$ and  $\textcircled{\scriptsize2}$, then any two chords in ${SC}_{a_i}$ certainly cross each other $($i.e.\ any two chards have an intersection$)$ as shown in Figure \ref{fig:4-01ji} $\textcircled{\scriptsize7}$.

 \item If one of two arcs enters from the left-hand side of $(a_i)$ and the other enters from the right-hand side as shown in Figure\ref{fig:4-01ji} $\textcircled{\scriptsize3}$, $\textcircled{\scriptsize4}$, $\textcircled{\scriptsize5}$ and $\textcircled{\scriptsize6}$, then there are no intersections in ${SC}_{a_i}$ as shown in Figure \ref{fig:4-01ji} $\textcircled{\scriptsize8}$. That is to say ${SC}_{a_i}$ is parallel.

\end{enumerate}

\end{lem}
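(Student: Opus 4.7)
The plan is to read off the crossing/non-crossing pattern of the chords in $SC_{a_i}$ directly from the cyclic order in which the $2a_i$ preimage points of the crossings of the rectangle $(a_i)$ are visited when one goes once around the oriented knot. First I would label the crossings of $(a_i)$ as $c_1,c_2,\ldots,c_{a_i}$ from left to right, so that $SC_{a_i}$ has exactly one chord per $c_k$, and then I would record the order of visits and argue combinatorially on that cyclic sequence.

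Inside the rectangle each of the two strands runs monotonically from one vertical side to the other, so the orientation of a strand determines whether it meets $c_1,\ldots,c_{a_i}$ in that order or in the reverse order. If both arcs enter from the same side, as in $\textcircled{\scriptsize 1}$ and $\textcircled{\scriptsize 2}$ of Figure~\ref{fig:4-01ji}, then both strands sweep the crossings in the same order, so along the knot the $2a_i$ preimage points in $(a_i)$ appear cyclically as
\[
c_1,c_2,\ldots,c_{a_i},\ \ldots\ ,c_1,c_2,\ldots,c_{a_i},
\]
where the gap indicates the portion of the knot outside $(a_i)$. For any $j<k$ the four endpoints of the chords $c_j$ and $c_k$ then occur in the cyclic order $c_j,c_k,c_j,c_k$ on the circle of the chord diagram, so every pair of chords crosses, which proves assertion (1). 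If instead the two arcs enter from opposite sides, as in $\textcircled{\scriptsize 3}$--$\textcircled{\scriptsize 6}$, one strand traverses the crossings in the order $c_1,\ldots,c_{a_i}$ and the other in the reverse order $c_{a_i},\ldots,c_1$, so the cyclic pattern becomes
\[
c_1,c_2,\ldots,c_{a_i},\ \ldots\ ,c_{a_i},c_{a_i-1},\ldots,c_1.
\]
For $j<k$ the endpoints now appear in the order $c_j,c_k,c_k,c_j$, so chord $c_k$ lies inside one of the two arcs cut off by $c_j$ and the two chords are disjoint, giving assertion (2).

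The only step that is not pure combinatorics is the identification of \emph{same side of entry} with \emph{same orientation along $(a_i)$}, and this is where I expect to have to be most careful. It amounts, however, to a direct inspection of the six local pictures $\textcircled{\scriptsize 1}$--$\textcircled{\scriptsize 6}$ in Figure~\ref{fig:4-01ji}: in $\textcircled{\scriptsize 1}$ and $\textcircled{\scriptsize 2}$ both arcs point in the same horizontal sense, while in $\textcircled{\scriptsize 3}$--$\textcircled{\scriptsize 6}$ they point oppositely. Once this is recorded, the remainder of the argument is insensitive to the parity of $a_i$, to whether the rectangle lies in the upper or lower row, and to the sign of the crossings, so no further case analysis is needed.
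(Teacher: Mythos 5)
Your argument is correct and is essentially the paper's own proof: the paper likewise labels the crossings $1,\ldots,k$ from left to right, observes that two arcs entering from the same side traverse them in the same order (hence every pair of chords interleaves and crosses) while arcs entering from opposite sides traverse them in reverse orders (hence the chords nest and are parallel), and reads the result off the resulting chord pattern. Your version merely makes the cyclic-order bookkeeping explicit where the paper defers to its figures.
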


\begin{figure}[htbp]
 \centering
 \includegraphics[width=6.5cm,clip]{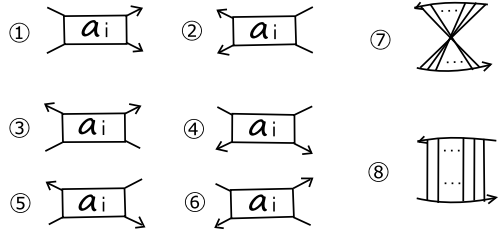}
 \caption{The orientations of two arcs in $(a_i)$ and the sub-chord diagram corresponding to $(a_i)$}
 \label{fig:4-01ji}
\end{figure}

\begin{proof}

First we name the crossings in $(a_i)$,\ $1, 2, \ldots, k$ from left to right. 

\begin{enumerate}

 \item If an arc enters from the left side, it passes the crossings 1, 2, $\ldots$ , k in order. Since the arc enters from the left side again, it passes the crossings in the same order. Therefore, the sub chord diagram corresponding to $(a_i)$ is as shown in Figure  \ref{fig:4-01-01ji}(1 ).

 \item On the otherhand, when the arcs enter from the different sides of $(a_i)$, the sub-chord diagram corresponding to $(a_i)$ is naturally as shown in Figure \ref{fig:4-01-01ji}( 2 ).

\end{enumerate}

\end{proof}

\begin{figure}[htbp]
 \centering
 \includegraphics[width=4cm,clip]{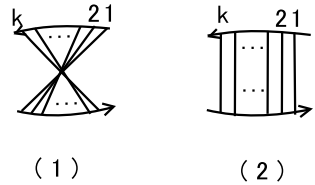}
 \caption{The sub-chord diagram corresponding to $(a_i)$}
 \label{fig:4-01-01ji}
\end{figure}

From the lemma above, we can consider a sub-chord diagram of a rectangle as one chord. That is to say, we can gather all chords in the sub-chord diagram corresponding to $(a_i)$ into one chord denoted by $\overline{a}_i$. Furthermore, in the case of $(1)$ in Lemma \ref{lem119}, we  name this chord \emph{I-chord} then represent it by a dotted line, while in the case of $(2)$ we name this chord \emph{P-chord} and represent it by a solid line. 

Moreover, we determine the chord diagram $CD_P$ corresponding to the diagram $D$. When $D$ is of type 1a, by thinking over the orientation of each rectangle, we can see that any $\overline {a}_{2i}$ for $1 \leq i \leq n$ is a P-chord, and any $\overline {a}_{2i-1}$ for $1 \leq i \leq n$ is an I-chord. If every $a_{2i-1}$, $1 \leq i \leq n$, is even, then we obtain the diagram as shown in Figure\ref{fig:4-02ji}(1), and also obtain the chord diagram as shown in Figure\ref{fig:4-02ji}(2). (For convenience, we represent a chord diagram not by a circle but by a quadrangle.)

\begin{figure}[htbp]
 \centering
 \includegraphics[width=12cm,clip]{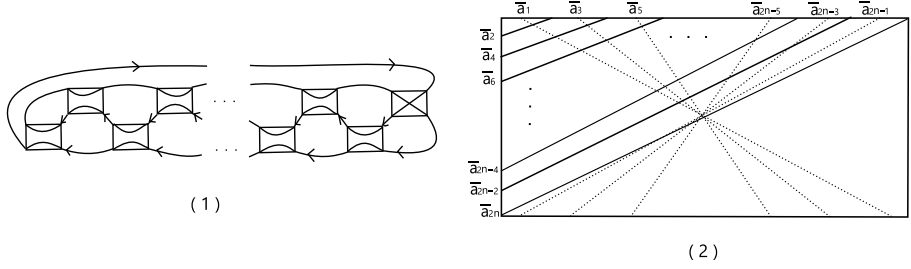}
 \caption{An oriented diagram $D$ of type 1a with every $a_{2i-1}$ even, and the chord diagram corresponding to $D$}
 \label{fig:4-02ji}
\end{figure}

Otherwise, the way to round a diagram depends on whether $a_{2i-1}$ is odd or even. So we rename the lower rectangles which consist of odd number half-twists (the number of these tangles is even),  $(b_1), (b_2), \ldots , (b_{2r})$ from left to right in the diagram. Moreover we also rename all upper rectangles as following:

\begin{itemize}

\item the upper rectangles on the left-hand side of $(b_1)$;  \ $(c_0^1), (c_0^2), \ldots, (c_0^{q_0})$\\

\item the upper rectangles between $(b_j)$ and $(b_{j+1})$;  \ $(c_j^1), (c_j^2), \ldots, (c_j^{q_j})$\\

\item the upper rectangles on the right-hand side of $(b_{2r})$;  \ $(c_{2r}^1), (c_{2r}^2), \ldots, (c_{2r}^{q_{2r}})$

\end{itemize}

Then we can obtain the sub chord diagram corresponding to the rectangles between $(b_j)$ and $(b_{j+1})$ as shown in Figure\ref{fig:4-03ji}, where $\overline{{c}_j^k}$ $(1 \leq k \leq q_j)$ consists of some parallel chords which correspond to the crossings in rectangle $(c_j^k)$.

\begin{figure}[htbp]
 \centering
 \includegraphics[width=12cm,clip]{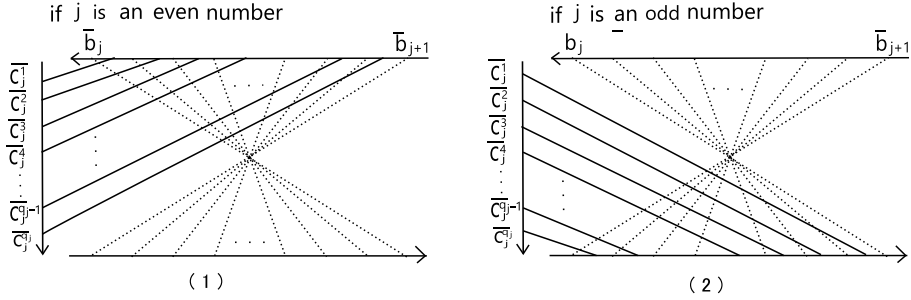}
 \caption{The sub chord diagram between $(b_i)$ and $(b_{j+1})$}
 \label{fig:4-03ji}
\end{figure}

Furthermore, any two P-chords in $\overline{c_j^1}, \overline{c_j^2}, \ldots , \overline{c_j^{q_j}}$ does not cross each other, so we can bundle them again into one P-chord. Now we represent them by a solid line. In other words, we consider that $c_j = c_j^1 + c_j^2 + \ldots + c_j^{q_j}$.
Since the chords which correspond to the lower tangles between  $(b_j)$ and $(b_{j+1})$ are all I-chords, by Theorem \ref{thm117} the number of chords which we can leave is at most only one. Besides, the I-chord which does not cross $\overline{c}_j$ is only $\overline{b}_j$ as shown in Figure \ref{fig:4-03ji}(1) or $\overline{b}_{j+1}$ as shown in Figure   \ref{fig:4-03ji}(2). So we only need to consider the chord diagram in which all I-chords between $\overline{b}_j$ and $\overline{b}_{j+1}$ were already delated as shown in Figure \ref{fig:4-04ji}.

\begin{figure}[htbp]
 \centering
 \includegraphics[width=8cm,clip]{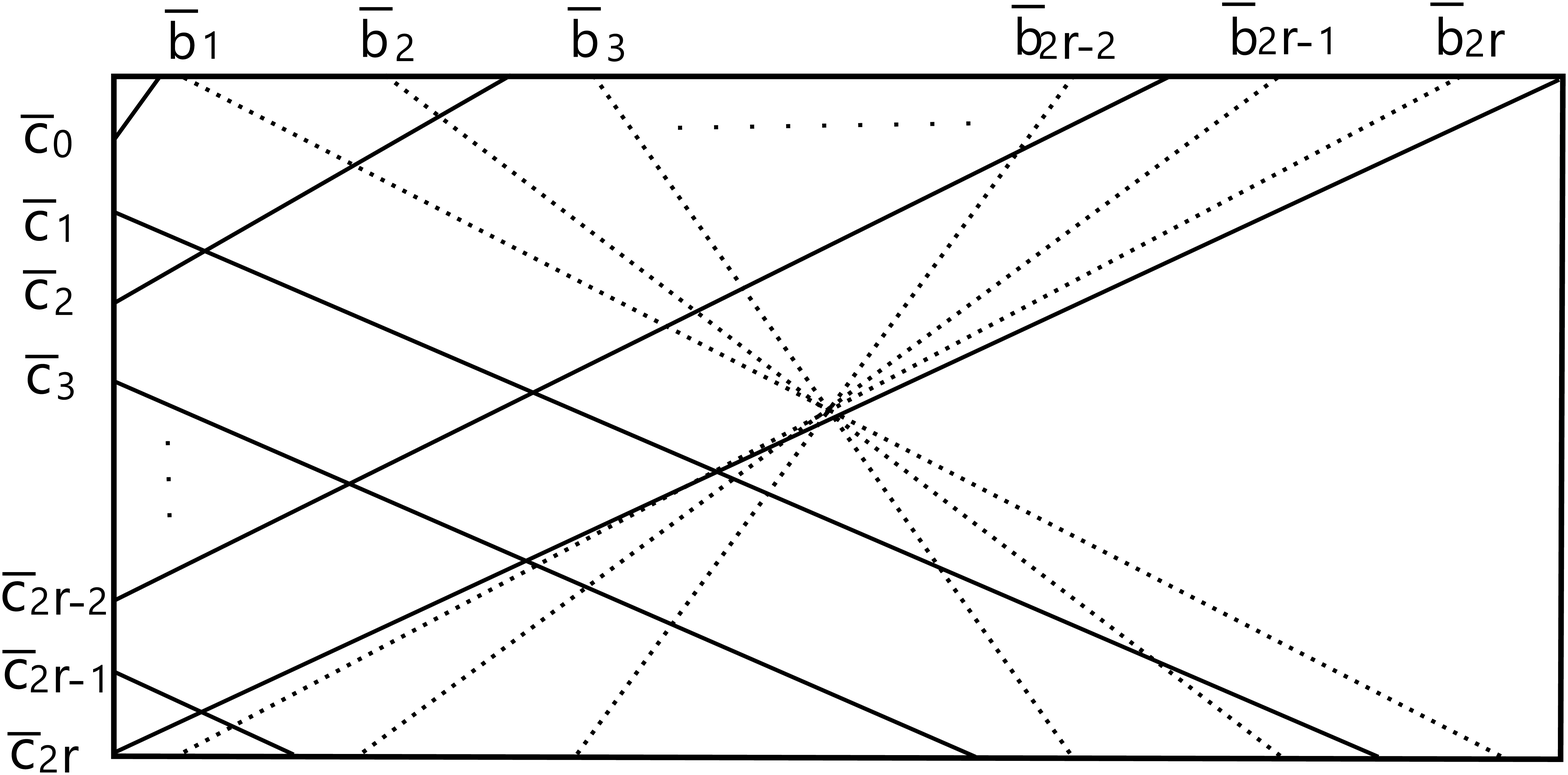}
 \caption{The chord diagram corresponding to the diagrams of type 1a.}
 \label{fig:4-04ji}
\end{figure}

In the case of the diagram of type 1b, we can consider in a similar fashion to type 1a. When every $a_{2i}$ is even, the diagram is as shown in Figure \ref{fig:4-04-01ji}(1), and the chord diagram is as shown in Figure \ref{fig:4-04-01ji}(2). Otherwise, the sub chord diagram between $(b_j)$ and $(b_{j+1})$ is also as shown in Figure \ref{fig:4-03ji}, and we can get the chord diagram as shown in Figure \ref{fig:4-06ji}. 

\begin{figure}[htbp]
 \centering
 \includegraphics[width=12cm,clip]{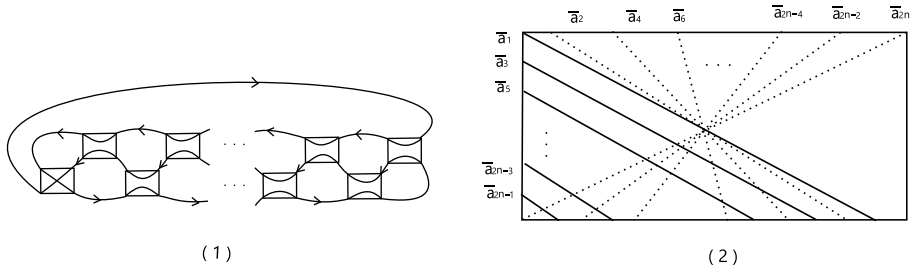}
 \caption{An oriented diagram $D$ of type 1b with every $a_{2i}$ even, and the chord diagram corresponding to $D$}
 \label{fig:4-04-01ji}
\end{figure}

\begin{figure}[htbp]
 \centering
 \includegraphics[width=8cm,clip]{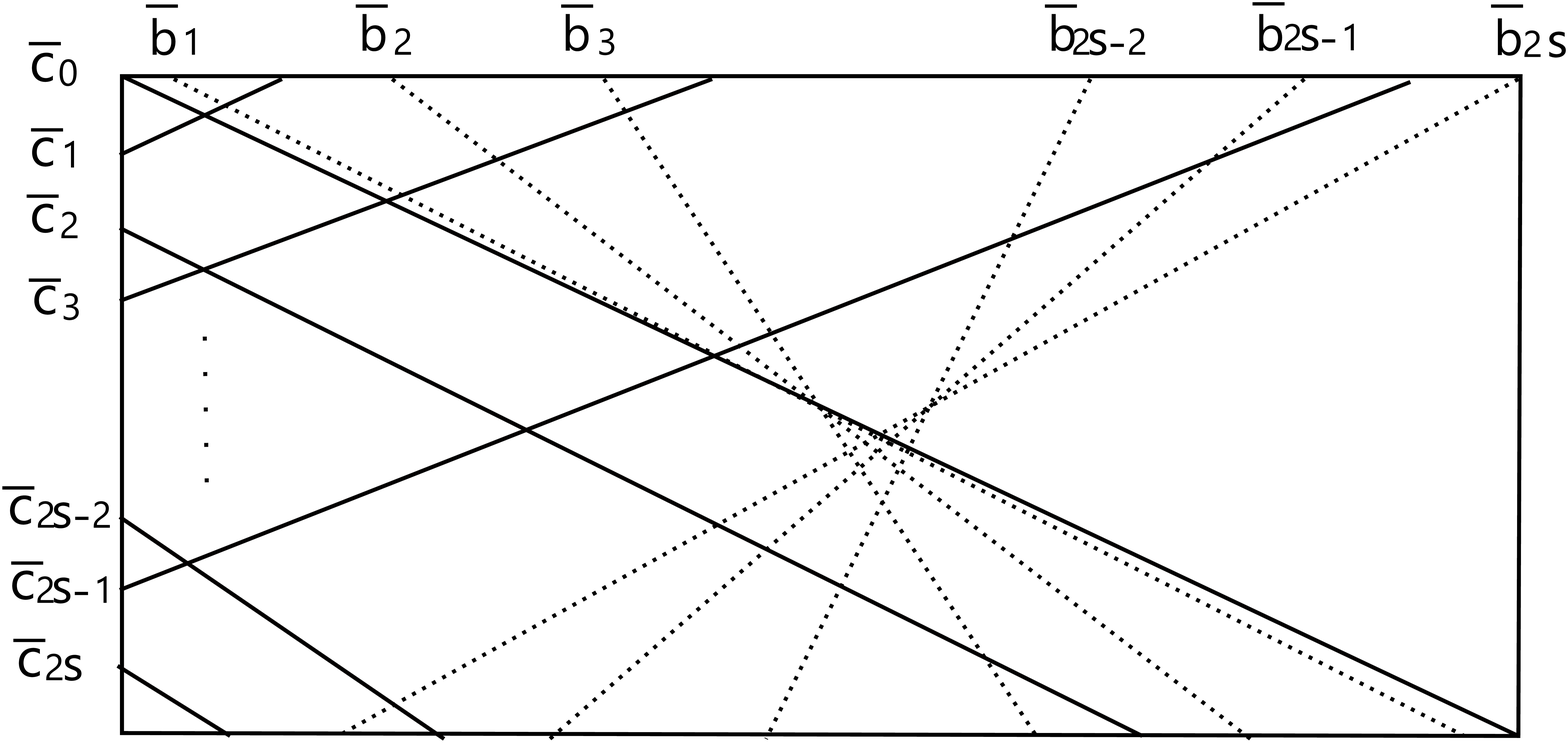}
 \caption{The chord diagram corresponding to the diagrams of type 1b.}
 \label{fig:4-06ji}
\end{figure}

On the other hand, in the case of the diagrams of type 2a and type 2b, when $j$ is an even number, the sub chord diagram is as shown in Figure \ref{fig:4-03ji}(1), and when $j$ is an odd number, the sub chord diagram is as shown in Figure \ref{fig:4-03ji}(2). Thus we can get the chord diagrams as shown in Figure \ref{fig:4-07ji} and Figure \ref{fig:4-08ji}.

\begin{figure}[htbp]
 \centering
 \includegraphics[width=8cm,clip]{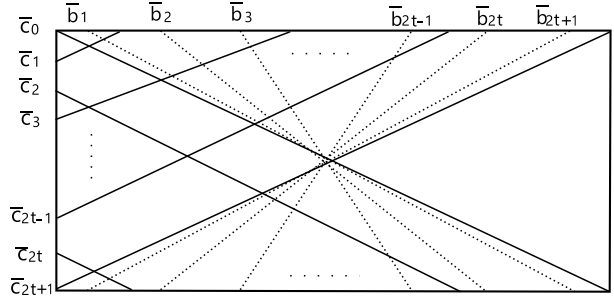}
 \caption{The chord diagram corresponding to the diagrams of type 2a.}
 \label{fig:4-07ji}
\end{figure}

\begin{figure}[htbp]
 \centering
 \includegraphics[width=8cm,clip]{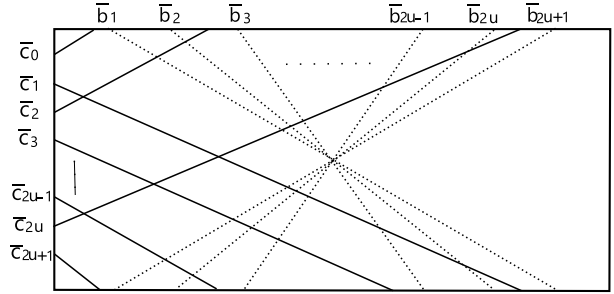}
 \caption{The chord diagram corresponding to the diagrams of type 2b.}
 \label{fig:4-08ji}
\end{figure}

In the condition above, we can obtain the theorem bellow.

\begin{thm}\label{thm118}

Let $D = D(a_1, a_2, \ldots , a_m)$ be a positive diagram or a negative diagram of a $2$-bridge knot such that $a_i > 0$ for all $i$ with $1 \leq i \leq m$. Then for the trivializing number of $D$, the following holds.

\begin{enumerate}

 \item When $D$ is of type 1a.

  \begin{enumerate}
  
    \item If every $a_{2i-1}$ is even, then $tr(D) = \sum_{i=1}^n a_{2i-1}$.

    \item Otherwise, 

$tr(D) = min\begin{Bmatrix} 
           \sum_{i=1}^n a_{2i-1} + \sum_{j=1}^r c_{2j-1}\\
           \sum_{i=1}^n a_{2i-1} + \sum_{j=1}^p c_{2j-1} + \sum_{j=p+1}^r c_{2j} - 1   
            \end{Bmatrix}$
           
   \end{enumerate}
        
  \item When $D$ is of type 1b.

    \begin{enumerate}

    \item If every $a_{2i}$ is even, then $tr(D) = \sum_{i=1}^n a_{2i}$.

    \item Otherwise, 

$tr(D) = min\begin{Bmatrix}
             \sum_{i=1}^n a_{2i} + \sum_{j=1}^s c_{2j-1}\\
             \sum_{i=1}^n a_{2i} + \sum_{j=0}^p c_{2j} + \sum_{j=p+2}^s c_{2j-1} - 1
            \end{Bmatrix}$

    \end{enumerate}
 
 \item When $D$ is of type 2a.
 
$tr(D) = min\begin{Bmatrix}
             \sum_{i=1}^n a_{2i} + \sum_{j=0}^t c_{2j+1}\\
             \sum_{i=1}^n a_{2i} + \sum_{j=0}^p c_{2j} + \sum_{j=p+1}^t c_{2j+1}
       
            \end{Bmatrix}$

  \item When $D$ is of type 2b. 

$tr(D) = min\{  \sum_{i=1}^n a_{2i+1} + \sum_{j=0}^p c_{2j+1} + \sum_{j=p+2}^u  c_{2j} - 1 \}$

\end{enumerate}

\end{thm}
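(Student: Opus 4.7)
The plan is to apply Hanaki's Theorem~\ref{thm117} directly to the chord diagrams that have already been constructed in Figures~\ref{fig:4-02ji}, \ref{fig:4-04ji}, \ref{fig:4-04-01ji}, \ref{fig:4-06ji}, \ref{fig:4-07ji}, and~\ref{fig:4-08ji}. Since $\tr(D)$ equals the minimum number of chords one must delete in order to obtain a parallel chord diagram, and each individual chord inside a bundle $\overline{a}_i$ or $\overline{c_j^k}$ corresponds to exactly one pre-crossing of $D$, the computation of $\tr(D)$ becomes a combinatorial optimization over which chords are retained.

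I would first dispatch the two ``easy'' subcases (1)(a) and (2)(a), in which every $a_{2i-1}$ (resp.\ every $a_{2i}$) is even. Here the chord diagram alternates between P-chord bundles and I-chord bundles as depicted in Figures~\ref{fig:4-02ji} and \ref{fig:4-04-01ji}. By Lemma~\ref{lem119} the chords inside an I-chord bundle pairwise cross, so at most one chord per bundle can survive; but a direct inspection of the chord diagram shows that any such surviving chord would still cross a neighbouring P-chord bundle one wishes to keep. Hence every I-chord bundle must be deleted in full, producing $\tr(D)=\sum_{i=1}^{n}a_{2i-1}$ (resp.\ $\sum_{i=1}^{n}a_{2i}$).

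For the remaining ``otherwise'' subcases I would work with the block decomposition into the tangles $(b_j)$ and $(c_j^k)$. The text preceding the theorem already establishes that in each block between $\overline{b}_j$ and $\overline{b}_{j+1}$, the only I-chord not crossing the P-chord bundle $\overline{c}_j$ is $\overline{b}_j$ or $\overline{b}_{j+1}$, so all interior I-chord bundles of the block are forced out; this accounts for the common summand $\sum_{i=1}^{n}a_{2i-1}$ (or its analogue in the other types) appearing in every branch of the minimum. It then remains to decide, globally, which subset of the P-chord bundles $\overline{c}_0,\overline{c}_1,\ldots,\overline{c}_{2r}$ can be retained simultaneously, together with possibly one extra surviving $\overline{b}_j$. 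A careful look at Figures~\ref{fig:4-04ji}, \ref{fig:4-06ji}, \ref{fig:4-07ji}, and~\ref{fig:4-08ji} shows that a maximal parallel family is either (i) the full family of $\overline{c}_j$'s of one fixed parity, or (ii) a ``switching'' family consisting of a prefix of one parity followed by a suffix of the opposite parity, joined at some index~$p$. The two entries of each minimum correspond to these two strategies; the ``$-1$'' in types 1a, 1b and 2b records the extra surviving $\overline{b}_j$ that becomes admissible at the switching index, while its absence in type 2a reflects the fact that the boundary orientation there does not permit such an extra survival.

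The principal obstacle is verifying that the two listed strategies are exhaustive, i.e.\ that no more clever configuration of retained chords yields a strictly smaller count. I would argue this by induction on the block-count $r$ combined with a case analysis: starting from any parallel sub-diagram, one shows that the indices of its retained P-chord bundles must form the described prefix--suffix pattern, using the explicit crossing structure between consecutive $\overline{c}_j$'s read off from Figure~\ref{fig:4-03ji}. The asymmetric behaviour of the boundary bundles $\overline{c}_0$ and $\overline{c}_{2r}$ (whose neighbour counts differ from interior bundles) is what distinguishes the four formulas from one another; in particular, for type 2b the boundary orientations force a switching and preclude the uniform option from being optimal, which is why its minimum collapses to a single term. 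What remains is then careful bookkeeping as the block-counts $r,s,t,u$ vary with the type.
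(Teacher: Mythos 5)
Your proposal follows essentially the same route as the paper's proof: both reduce the computation to Hanaki's criterion (Theorem~\ref{thm117}), bundle the crossings of each rectangle into I-chords and P-chords via Lemma~\ref{lem119}, dispatch the all-even subcases by noting that every surviving I-chord would cross a retained P-chord, and obtain the remaining formulas as a minimum over the ``uniform parity'' and ``prefix--suffix switching'' families of retained P-chords. The only divergence is that you explicitly flag the exhaustiveness of these two strategies as a step requiring justification and sketch an induction on the block count for it, whereas the paper simply exhibits the strategies and asserts the minimum.
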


\begin{pr}

\begin{enumerate}

 \item When $D$ is of type 1a.

 \begin{enumerate}

\item If every $a_{2i-1}$ is even, then we have the chord diagram as shown in Figure\ref{fig:4-02ji}(2). Since any two I-chords in this chord diagram cross each other, we can leave at most only one I-chord when we attempt to gain a trivial chord diagram. Moreover every two chords in any I-chord also cross each other. This means that the number of the chords corresponding to the crossings in lower rectangles which we can leave is at most only one.

In addition, the P-chords corresponding to the crossings in upper rectangles are all parallel and any P-chord crosses at least one I-chord. Hence the minimal number of the chords which we must delate in order to get a trivial chord diagram is the number of all the chords which correspond to the crossings in lower rectangles. Therefore, we have the following:

$tr(D) = \sum_{i=1}^n a_{2i-1}$

 \item Otherwise, the chord diagram as shown in Figure \ref{fig:4-04ji}, and we can see the P-chord represented by $\overline{c}_{2r}$ crosses all P-chords represented by $\overline{c}_{2j-1}$ \ $(1 \leq j \leq r)$ \ and all I-chords represented by $\overline{b}_k$ \ $(1 \leq k \leq r)$. (This means that $\overline{c}_{2r}$ crosses all chords corresponding to the crossings in lower rectangles). So if we leave $\overline{c}_{2r}$ then we must delete all these chords which cross $\overline{c}_{2r}$. That is to say the number of chords we must delete is $\sum_{i=1}^n a_{2i-1} + \sum_{j=1}^r c_{2j-1}$. See Figure \ref{fig:4-05ji}(1).

When we delete $\overline{c}_{2r}$, we can leave all chords in P-chord $\overline{c}_{2r-1}$ and only one chord in I-chord $\overline{b}_{2r-1}$. So the number of chords we need to delete is $\sum_{i=1}^n a_{2i-1} + \sum_{j=1}^{r-1} c_{2j-1} + c_{2r} - 1$. See Figure \ref{fig:4-05ji}(2). 

\begin{figure}[htbp]
 \centering
 \includegraphics[width=13cm,clip]{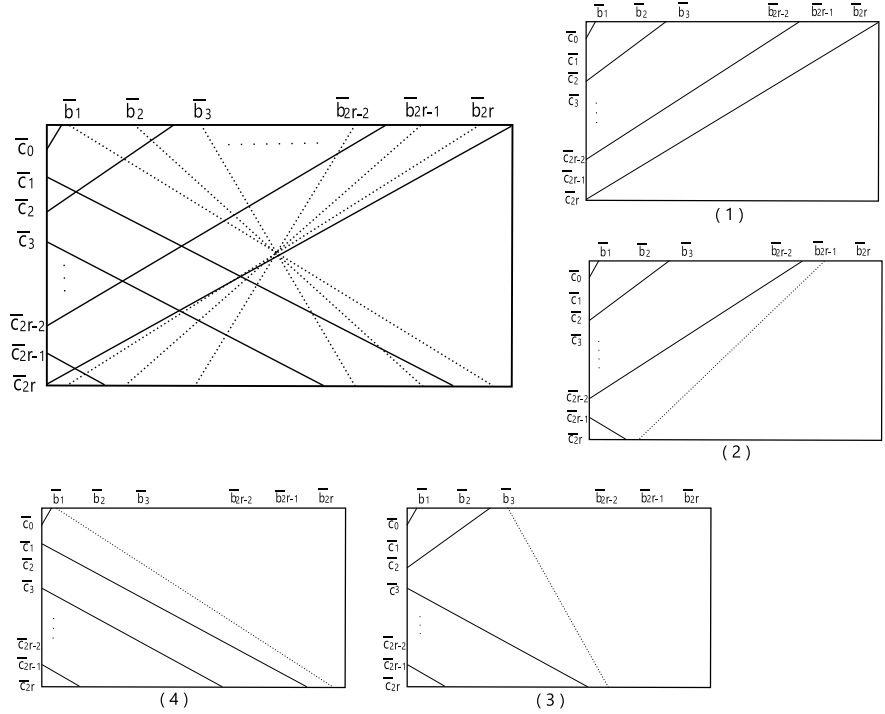}
 \caption{The operation of deleting some P-chords}
 \label{fig:4-05ji}
\end{figure}

Next we attempt to delete the P-chords which correspond to $\overline{c}_{2j}$  $(1 \leq j \leq r)$ step by step in the way as following: \{$\overline{c}_{2r}\} \rightarrow \{\overline{c}_{2r}, \overline{c}_{2r-2}\} \rightarrow \{\overline{c}_{2r},\overline{c}_{2r-2},\overline{c}_{2r-4}\} \rightarrow \cdots \rightarrow \{\overline{c}_{2r}, \overline{c}_{2r-2}, \cdots, \overline{c}_2 \} $. 

By these operations we can also get a trivial chord diagram even if we leave the P-chords which correspond to $\overline{c}_{2j-1}$  $(1 \leq j \leq r)$ step by step in the way as following: \{$\overline{c}_{2r-1}\} \rightarrow \{\overline{c}_{2r-1},\overline{c}_{2r-3}\} \rightarrow \{\overline{c}_{2r-1},\overline{c}_{2r-3},\overline{c}_{2r-5}\} \rightarrow \cdots \rightarrow \{\overline{c}_{2r-1},\overline{c}_{2r-3} \cdots \overline{c}_1\}$. 

There is an one-to-one correlation between these two operations. Consequently, the minimum of these numbers is the trivializing number of the diagram, and the following holds.

$tr(D) = min\begin{Bmatrix}
            \sum_{i=1}^n a_{2i-1} + \sum_{j=1}^r c_{2j-1}\\
            \sum_{i=1}^n a_{2i-1} + \sum_{j=1}^p c_{2j-1} + \sum_{j=p+1}^r c_{2j} - 1
            \end{Bmatrix}$ \ 
\vspace{0.3cm}

  \end{enumerate}

 \item When $D$ is of type 1b.
  
\begin{enumerate}

 \item If every $a_{2i}$ is even, then we can consider in a similar fashion to type 1a and can easily see $tr(D) = \sum_{i=1}^n a_{2i}$.

 \item Otherwise, from the chord diagram as shown in Figure\ref{fig:4-06ji},  we know the P-chord $\overline{c}_{2r}$ in Figure\ref{fig:4-04ji} is replaced by $\overline{c}_0$ in Figure \ref{fig:4-06ji}. In this case, if we delete the P-chords represented by $\overline{c}_{2j}$ \ $(0 \leq j \leq s)$ step by step in the way \{$\overline{c}_0\} \rightarrow \{\overline{c}_0,\overline{c}_2\} \rightarrow \{\overline{c}_0,\overline{c}_2,\overline{c}_4\} \ldots $, then we can leave \{$\overline{c}_1\} \rightarrow \{\overline{c}_1,\overline{c}_3\} \rightarrow \{\overline{c}_1,\overline{c}_3,\overline{c}_5\} \ldots $, by way of compensation. Thus the following holds.

$tr(D) = min\begin{Bmatrix}
             \sum_{i=1}^n a_{2i} + \sum_{j=1}^s c_{2j-1}\\
             \sum_{i=1}^n a_{2i} + \sum_{j=0}^p c_{2j} + \sum_{j=p+2}^s c_{2j-1} - 1
            \end{Bmatrix}$ \

\end{enumerate}

\item When $D$ is of type 2a. In this case, the chord diagram is as shown in Figure \ref{fig:4-07ji}, and we see that every I-chord which corresponds to $(b_j)$ $(1 \leq j \leq 2t+1)$ necessarily crosses two P-chords which correspond to $(c_0)$ and $(c_{2t+1})$. So we can leave none of these I-chords unless we delete both $\overline{c}_0$ and $\overline{c}_{2t+1}$. In addition, we consider the relation of P-chords which correspond to $(c_j)$ \ $(1 \leq j \leq 2t+1)$. If we leave every $\overline{c}_{2j}$ \ $(0 \leq j \leq t)$, then we must delete every $\overline{c}_{2j+1}$ \ $(0 \leq j \leq t)$. Hence the number of chords which we need to delete is the following:

$\sum_{i=1}^n a_{2i} + \sum_{j=0}^t c_{2j+1}$.

Furthermore there exists a relation in the P-chords in this chord diagram. That is, if we delete $\overline{c}_0$ then we can leave $\overline{c}_1$, if we delete \{$\overline{c}_0, \overline{c}_2$\} then we can leave \{$\overline{c}_1, \overline{c}_3$\}, and so on. Because of this, the following holds. 

$tr(D) = min\begin{Bmatrix}
             \sum_{i=1}^n a_{2i} + \sum_{j=0}^t c_{2j+1}\\
             \sum_{i=1}^n a_{2i} + \sum_{j=0}^p c_{2j} + \sum_{j=p+1}^t c_{2j+1}
            
            \end{Bmatrix}$

\item When $D$ is of type 2b, the chord diagram is as shown in Figure \ref{fig:4-08ji}. In this chord diagram, $\overline{c}_0$ and $\overline{c}_{2u+1}$ dose not cross each other. Moreover they dose not cross any other P-chord or I-chord. Therefore, we can leave both $\overline{c}_0$ and $\overline{c}_{2u+1}$. However for I-chords $\overline{b}_j$ \ $(1 \leq j \leq 2u+1)$, any two of them cross each other, so we can leave at most only one I-chord among \{$\overline{b}_j$\}. Thus, if we leave all P-chords corresponding to $(c_{2k})$ \ $(1 \leq k \leq u)$, we must delete all P-chords corresponding to $(c_{2k-1})$ \ $(1 \leq k \leq u)$. Hence the number of all chords which we must delete is $\sum_{i=1}^n a_{2i+1} + \sum_{j=0}^{u-1} c_{2j+1} - 1$.

Besides, if we orderly delete some P-chords step by step such as \{$\overline{c}_{2u}\} \rightarrow \{\overline{c}_{2u},\overline{c}_{2u-2}\} \rightarrow \{\overline{c}_{2u},\overline{c}_{2u-2},\overline{c}_{2u-4}\} \cdots $ , we can leave other P-chords such as  \{$\overline{c}_{2u-1}\} \rightarrow \{\overline{c}_{2u-1},\overline{c}_{2u-3}\} \rightarrow \{\overline{c}_{2u-1},\overline{c}_{2u-3},\overline{c}_{2u-5}\} \cdots $ by way of compensation. Finally the following holds.

$tr(D) = min\{ \sum_{i=1}^n a_{2i+1} + \sum_{j=0}^p c_{2j+1} + \sum_{j=p+2}^u  c_{2j} - 1 \}$ 

\end{enumerate}

We have just completed the proof of Main Theorem.

\qed\end{pr}

\section{The relation between trivializing number and unknotting number}

In this section we study the relation between the trivializing number and the unknotting number. The definitions of the unknotting number of a diagram and the unknotting number of a knot are the following:

\begin{deff}\label{deff120}
The \emph{unknotting number} of a diagram $D$, denoted by $\mathrm{u}(D)$, is the minimal number of crossings of $D$ whose over/under information should be changed for getting a diagram of a trivial knot.
\end{deff}

\begin{deff}\label{deff121}
The \emph{unknotting number} of a knot $K$, denoted by $\mathrm{u}(K)$, is the minimum of $\mathrm{u}(D)$, where the minimum is taken over all diagrams $D$ of $K$.
\end{deff}

There is a relation between the unknotting number and the signature. (About the signature there is a detailed explanation in \cite{Mu}). The signature is an invariant of knots, and in general the following holds.

\begin{thm}[\cite{Mu}]\label{thm122}

$\frac{1}{2}{|\sigma(D)|} = \frac{1}{2}{|\sigma(K)|} \leq u(K) \leq u(D)$

\end{thm}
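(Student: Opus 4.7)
The statement naturally decomposes into three parts, which I would attack separately. The middle equality $\tfrac{1}{2}|\sigma(D)| = \tfrac{1}{2}|\sigma(K)|$ is essentially a definitional observation: the signature of a knot is defined via a Seifert surface $F$ with Seifert matrix $V$ by $\sigma(K) := \sigma(V + V^{T})$, and the classical computation (due to Murasugi) shows that this quantity is independent of the choice of $F$. Hence for any diagram $D$ of $K$, $\sigma(D)$ computed from the Seifert surface produced by Seifert's algorithm agrees with $\sigma(K)$. The rightmost inequality $u(K) \leq u(D)$ is immediate from Definition~\ref{deff121}, since $u(K)$ is the infimum of $u(D)$ taken over all diagrams of $K$.

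The main content is the inequality $\tfrac{1}{2}|\sigma(K)| \leq u(K)$, and I would reduce it to a single key claim: if two knots $K_{+}$ and $K_{-}$ differ by a single crossing change, then $|\sigma(K_{+}) - \sigma(K_{-})| \leq 2$. To establish this, choose a Seifert surface $F_{-}$ for $K_{-}$ containing a small band $b$ located at the site of the crossing, arranged so that replacing $b$ by a band with $\pm 2$ additional half-twists yields a Seifert surface $F_{+}$ for $K_{+}$. Relative to a suitable basis of $H_{1}$, the corresponding Seifert matrices $V_{\pm}$ differ in only one diagonal entry, so $(V_{+} + V_{+}^{T}) - (V_{-} + V_{-}^{T})$ is a rank-one symmetric matrix; Cauchy interlacing (or Sylvester's law of inertia) then bounds the change in signature by $2$.

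Granting this key lemma, the full inequality follows by induction on $u(K)$. If $u(K) = n$, there is a sequence $K = K_{0}, K_{1}, \ldots, K_{n} = O$ (the unknot) in which each $K_{i+1}$ is obtained from $K_{i}$ by a single crossing change. Since $\sigma(O) = 0$, a telescoping estimate gives
\[
|\sigma(K)| = |\sigma(K_{0}) - \sigma(K_{n})| \leq \sum_{i=0}^{n-1} |\sigma(K_{i}) - \sigma(K_{i+1})| \leq 2n = 2\, u(K),
\]
and dividing by $2$ yields the desired bound.

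The main obstacle I foresee is producing the Seifert surface in a form that makes the two Seifert matrices differ in exactly one diagonal entry; this geometric step carries all of the content, whereas the ensuing linear-algebra step is routine. A clean alternative, which I would use if the explicit surface construction became cumbersome, is to phrase the argument in terms of a genus-one cobordism between $K_{+}$ and $K_{-}$ and to invoke the general inequality $|\sigma(K_{+}) - \sigma(K_{-})| \leq 2g$ for genus-$g$ concordance, which packages the same linear-algebra estimate in a more coordinate-free form.
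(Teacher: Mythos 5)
The paper offers no proof of this statement at all: it is quoted verbatim from \cite{Mu} as a known classical result, so there is nothing internal to compare your argument against. Your reconstruction is the standard one and is correct in outline: the equality $\tfrac{1}{2}|\sigma(D)|=\tfrac{1}{2}|\sigma(K)|$ is the invariance of the signature, $u(K)\leq u(D)$ is immediate from Definition~\ref{deff121}, and the substantive inequality reduces to the lemma that a single crossing change alters $\sigma$ by at most $2$, proved exactly as you say via a rank-one perturbation of $V+V^{T}$ together with the fact that two symmetric forms agreeing on a codimension-one subspace have signatures differing by at most $2$. The one place requiring genuine care is the geometric step you already flag: one must choose the spanning tree of the Seifert graph to avoid the band at the changed crossing, so that exactly one basis cycle of $H_{1}$ of the Seifert surface runs through it and only the corresponding diagonal entry of the Seifert matrix changes (by $\pm 1$); with that choice made explicit, the proof is complete and matches the argument in the cited reference.
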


In addition for a alternating diagram, it is known that $\sigma(D) = -w(D) / 2 + (W - B) / 2$ (\cite{Tr}), where $w(D)$ is the sum of local writhes of all crossings, $B$ is the number of domains colored with a grayish color when we give checkerboard coloring as shown in Figure \ref{fig:5-01ji}, and $W$ is the number of domains which are not colored. For example, in the case as shown in Figure \ref{fig:5-01ji}, the number of $+$ crossings is $2$, and that of $-$ crossings is $4$, then $\sigma(D) = 2 + (-4) = -2$, and $W = 5$, $B = 3$. Therefore we can get $\sigma(D) = \sigma(K) = -(-2) / 2 + (5 - 3) /2 = 2$, and $|\sigma(D)| / 2 = 1 \leq u(K) \leq u(D)$. In actually, we can obtain a diagram of a trivial knot with one crossing change, hence $u(D) = u(K) = 1$.

\begin{figure}[htbp]
 \centering
 \includegraphics[width=9cm,clip]{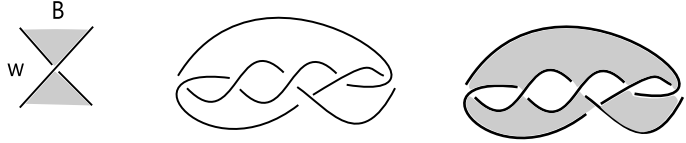}
 \caption{An example of the checkerboard coloring and local writhes}
 \label{fig:5-01ji}
\end{figure}

About the relation between the trivializing number and the unknotting number, it is known that $2u(D) \leq tr(D)$ \  $(2u(K) \leq tr(K))$ holds in general. However, particularly for positive knots, there exists a conjecture that $2u(K) = tr(K)$ (\cite{Ha}). And as the partial positive answer of this, we have the next corollary to Theorem \ref{thm118} and Theorem \ref{thm122}.

\begin{cor}\label{cor123}

Let $K$ be a positive 2-bridge knot and has a diagram $D$ = $C(a_1, a_2, \ldots , a_{2n})$ \ $a_i > 0$ for any 
$i$ \ $(1 \leq i \leq 2n)$. If $a_{2i-1}$ is even for any $i$ \ $(1 \leq i \leq n)$, or $a_{2i}$ is even for any $i$ \ $(1 \leq i \leq n)$, then $2u(K) = tr(K)$.
 
\end{cor}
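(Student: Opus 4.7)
The plan is to force equality in the chain
\begin{equation*}
|\sigma(K)|\ \le\ 2u(K)\ \le\ tr(K)\ \le\ tr(D),
\end{equation*}
where the first inequality comes from Theorem~\ref{thm122}, the second is the general bound $2u\le tr$ recalled just before the corollary, and the third is the definition of $tr(K)$. Accordingly, it suffices to establish the reverse inequality $tr(D)\le|\sigma(K)|$; once this is known, the whole chain collapses and, in particular, $2u(K)=tr(K)$.

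First I would evaluate $tr(D)$. If every $a_{2i-1}$ is even then $a_1$ is even, so Proposition~\ref{prop116} forces $D$ into case (1)(a), i.e.\ type 1a; symmetrically, if every $a_{2i}$ is even then $a_{2n}$ is even, which forces case (1)(b), i.e.\ type 1b. Theorem~\ref{thm118}(1)(a) and (2)(a) then yield, respectively,
\begin{equation*}
tr(D)=\sum_{i=1}^{n}a_{2i-1}\qquad\text{or}\qquad tr(D)=\sum_{i=1}^{n}a_{2i}.
\end{equation*}
Denote this common value by $T$. Since $D$ is a positive alternating diagram (Theorem~\ref{thm114}), the Traczyk formula $\sigma(D)=-w(D)/2+(W-B)/2$ applies, and $w(D)=\sum_{i=1}^{2n}a_i$ because every crossing has sign $+$.

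The heart of the proof is then a direct count of the checkerboard regions $W$ and $B$ of the standard $2$-bridge diagram. I would fix the coloring so that the two regions flanking each lower rectangle receive the same color; then an upper rectangle $(a_{2i-1})$ contributes $a_{2i-1}-1$ bigons of one fixed color and a lower rectangle $(a_{2i})$ contributes $a_{2i}-1$ bigons of the opposite color, while the "joining" regions between consecutive rectangles, together with the two extreme outer regions, contribute a small boundary correction. Using the parity hypothesis (which is precisely what makes the joining regions line up coherently, and which is already guaranteed by Proposition~\ref{prop116}) one obtains, after bookkeeping, $W-B=\pm(2T-\sum_i a_i)$, and therefore $\sigma(D)=\pm T$. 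Hence $|\sigma(K)|=T=tr(D)$, and inserting this into the chain above gives $tr(K)=2u(K)$.

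The main obstacle is the region count in the second step: the upper and lower rectangles contribute bigons of opposite checkerboard colors, so the balance of $W-B$ depends delicately on the parity assumption, and one must also verify that the two ends of the standard diagram each contribute exactly the expected outer region in order for $|W-B|$ to equal $|2T-w(D)|$ on the nose rather than up to a stray additive constant.
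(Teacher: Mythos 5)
Your proposal is correct and follows essentially the same route as the paper: identify the type via Proposition~\ref{prop116}, read off $tr(D)$ from Theorem~\ref{thm118}(1)(a)/(2)(a), compute $\sigma(D)$ by Traczyk's formula with the checkerboard count ($W=\sum a_{2i}+1$, $B=\sum a_{2i-1}+1$ in the first case), and collapse the chain $|\sigma(K)|\le 2u(K)\le tr(K)\le tr(D)=|\sigma(K)|$. The only difference is that the paper additionally exhibits an explicit unknotting sequence on the lower rectangles to pin down $u(D)$ exactly, which your streamlined argument shows is not needed for the stated equality.
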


\begin{proof}\label{pr124}
First we prove the case where any $a_{2i-1}$ is an even number. In this case, $D$ is a minimal diagram of $K$, so by Theorem \ref{thm114}, $D$ is an positive and alternating diagram. Besides, by the Proposition \ref{prop116},  $a_{2n}$ must be an odd number and other $a_{2i} \ (1 \leq i \leq n-1)$ \ are necessarily all even numbers. Moreover, the sign of any crossing is $+$, thereby $w(D) = \sum_{i=1}^{2n} a_i$. The checkerboard coloring is like as shown in Figure \ref{fig:5-02ji}, and we know $W = \sum_{i=1}^n a_{2i} + 1$, $B = \sum_{i=1}^n a_{2i-1} + 1$. 

\begin{figure}[htbp]
 \centering
 \includegraphics[width=8cm,clip]{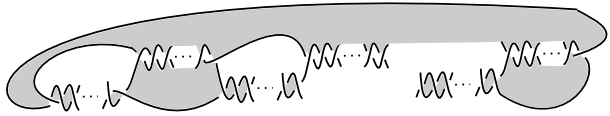}
 \caption{The checkerboard coloring of a 2-bridge knot diagram in which any $a_{2i-1}$ is an even number}
 \label{fig:5-02ji}
\end{figure}

Therefore, next equality holds.

 $\sigma(D) = -\frac{1}{2}(w(D)) + \frac{1}{2}(W - B) = \frac{1}{2}( -\sum_{i=1}^{2n} a_i + \sum_{i=1}^n a_{2i} - \sum_{i=1}^n a_{2i-1}) = -\sum_{i=1}^n a_{2i-1}$

Furthermore by Theorem \ref{thm122}, we can see $(|\sigma(D)|)/2 = (\sum_{i=1}^n a_{2i-1})/2 \leq u(K) = u(D)$. In actually as shown in Figure \ref{fig:5-03ji}, we can obtain a trivial diagram by some crossing changes of the crossings which correspondent to lower tangles, and the number of these crossing changes is $(\sum_{i=1}^n a_{2i-1})/2$. Hence $u(D) = u(K) = (\sum_{i=1}^{n} a_{2i-1})/2$. 

\begin{figure}[htbp]
 \centering
 \includegraphics[width=8cm,clip]{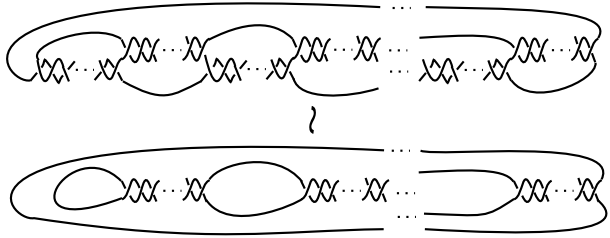}
 \caption{By some crossing changes, we can obtain a trivial knot diagram.}
 \label{fig:5-03ji}
\end{figure}

Finally we can get the inequality $2u(K) \leq tr(K) \leq tr(D)$ and  the equality $2u(K) = tr(D)$. Thus $2u(K) = tr(K)$ holds.

In the case that any $a_{2i}$ is an even number, we can also gain this equality in a similar fashion.

\end{proof}

This result is for the special case of positive 2-bridge knots. So whether \ $2u(D) = tr(D)$ holds for any minimal diagram of positive 2-bridge knots or not, and whether \ $2u(K) = tr(K)$ holds or not, these questions are our theme of the future.

\section{Positive pretzel knot}

For positive pretzel knots we can get the following:

\begin{thm}\label{thm125}

Let $K$ be a pretzel knot $P(p_1, p_2, \ldots , p_{2n})$ \ $p_i > 0$ for any $i$ $(1 \leq i \leq 2n)$, $p_{2n}$ is even and other $p_i$s are all odd $(1 \leq i \leq 2n-1)$, then the following holds.

$tr(K) = 2u(K) = \sum_{i=1}^{2n} p_i - 2n + 1$

\end{thm}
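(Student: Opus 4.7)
The plan mirrors the sandwich argument used in Corollary~\ref{cor123}: show separately that $tr(D)=\sum p_i-2n+1$ (via the chord diagram of the standard positive pretzel diagram $D$ of $K$) and that $|\sigma(K)|=\sum p_i-2n+1$ (via Trotter's formula for $\sigma(D)$), and then invoke
\[
|\sigma(K)|\le 2u(K)\le tr(K)\le tr(D)
\]
(Theorem~\ref{thm122} together with the general bound $2u(K)\le tr(K)$) to force equality throughout.

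The first step is to build and analyze the chord diagram $CD_D$. Orient $D$ and trace the single cycle: since $p_1,\dots,p_{2n-1}$ are odd (so each of those tangles swaps the two strands) while $p_{2n}$ is even (no swap), both passages through each tangle run in the same vertical direction, so Lemma~\ref{lem119}(1) guarantees that every tangle contributes an I-chord bundle $\overline a_i$ to $CD_D$. A direct trace shows that the $4n$ arcs of $CD_D$ appear in the cyclic order
\[
1,\,2,\,\dots,\,2n-1,\,2n,\,2n-1,\,2n-2,\,\dots,\,2,\,1,\,2n.
\]
Hence $\overline a_1,\dots,\overline a_{2n-1}$ are pairwise nested on the circle, while the endpoint arcs of $\overline a_{2n}$ (at positions $2n$ and $4n$) lie on opposite sides of every chord $(i,4n-i)$ with $i<2n$, so $\overline a_{2n}$ crosses every other $\overline a_i$. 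Applying Theorem~\ref{thm117} leaves two extremal options for producing a parallel diagram: delete all of $\overline a_{2n}$ and keep one chord from each of $\overline a_1,\dots,\overline a_{2n-1}$ (discarding $\sum p_i-(2n-1)$ chords), or keep one chord from $\overline a_{2n}$ and delete everything else (discarding $\sum p_i-1$ chords). The former wins, yielding $tr(D)=\sum_{i=1}^{2n}p_i-2n+1$.

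For the signature, $D$ is positive and alternating, so $w(D)=\sum p_i$. A checkerboard coloring with the unbounded region white forces the $2n-1$ between-tangle regions to be white and the regions above, below, together with the $p_i-1$ lens regions inside each tangle to be black; thus $W=2n$ and $B=\sum p_i-2n+2$. Trotter's formula then gives
\[
\sigma(D)=-\tfrac{1}{2}w(D)+\tfrac{1}{2}(W-B)=-\Bigl(\sum_{i=1}^{2n}p_i-2n+1\Bigr),
\]
so $|\sigma(K)|=\sum p_i-2n+1$, and the sandwich above collapses to equality, proving $tr(K)=2u(K)=\sum p_i-2n+1$. The principal obstacle is the combinatorial verification of the visit pattern—carefully tracing the cycle through the under- and over-arcs to confirm that the second pass through tangle $2n$ really appears last in the cyclic order, so that the endpoint arcs of $\overline a_{2n}$ separate the nested family $\overline a_1,\dots,\overline a_{2n-1}$—and, in parallel, the checkerboard bookkeeping for the signature, which is analogous to but a little more involved than the $2$-bridge case handled in Corollary~\ref{cor123}.
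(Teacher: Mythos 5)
Your proposal is correct and follows essentially the same route as the paper's proof: the same standard positive alternating pretzel diagram, the same chord-diagram computation showing $2n-1$ mutually non-crossing I-chord bundles plus one bundle $\overline{a}_{2n}$ crossing all others (giving $tr(D)=\sum p_i-2n+1$), the same checkerboard/Traczyk signature computation, and the same sandwich $|\sigma(K)|\le 2u(K)\le tr(K)\le tr(D)$. You simply make explicit the cyclic visit order and the region count that the paper delegates to Figures \ref{fig:6-02ji} and \ref{fig:6-03ji}.
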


\begin{proof}\label{pr126}

A diagram $D$ of knot $K$ is as shown in Figure \ref{fig:6-01ji}, and we know this diagram is positive and alternating. Besides the sub-chord diagram which corresponds to each $\overline{p}_i$ is an I-chord. So the chord diagram of $D$ is as shown in Figure \ref{fig:6-02ji}.

\begin{figure}[htbp]
 \centering 
 \includegraphics[width=6.5cm, clip]{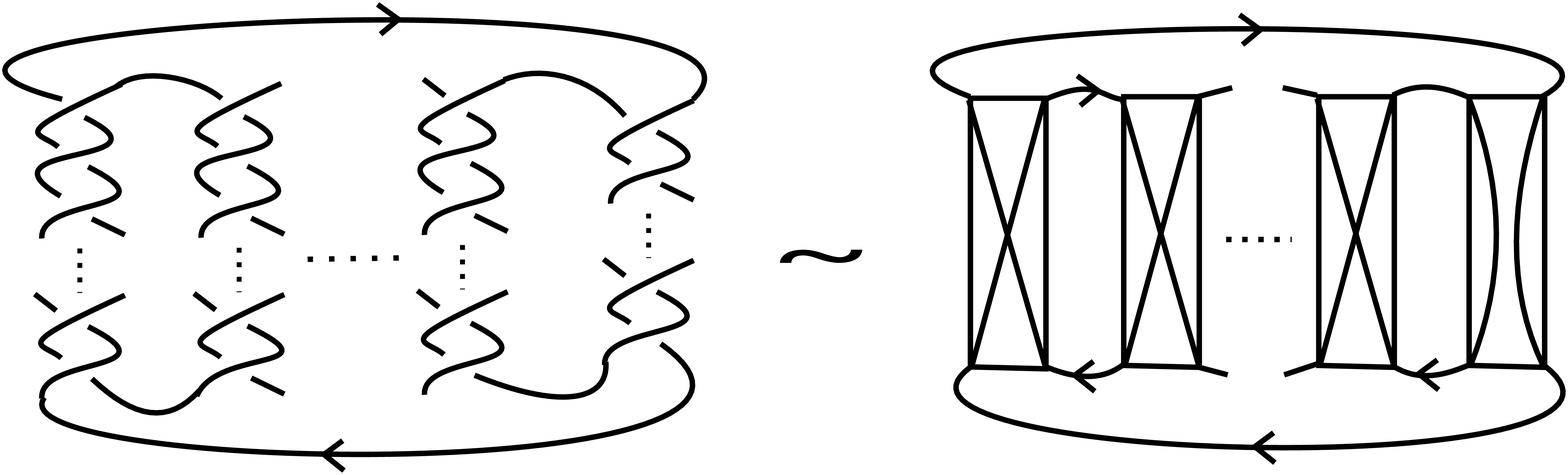}
 \caption{The standard diagram $D$ of $K$}
 \label{fig:6-01ji}
 \end{figure}

\begin{figure}[htbp]
 \centering 
 \includegraphics[width=9cm, clip]{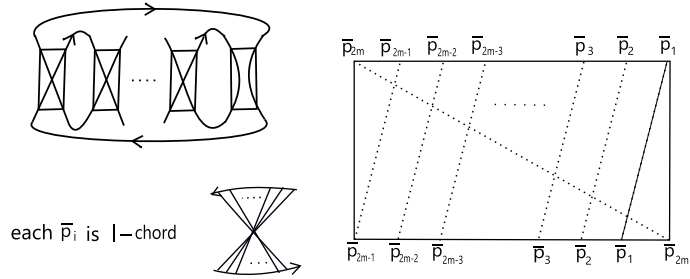}
\caption{The chord diagram of $K$}
 \label{fig:6-02ji}
 \end{figure}

Then we can easily obtain the trivializing number of $D$. Namely, $tr(D) = \sum_{i=1}^{2n} p_i - 2n + 1$.

Furthermore, by the checkerboard coloring as shown in Figure \ref{fig:6-03ji}, the signature of $K$ is the following:

 $\sigma(K) = \sigma(D) = -\frac{1}{2}w(D) + \frac{1}{2}(W - B) = - (\sum_{i=1}^{2n} p_i - 2n + 1)$

By the inequality $|\sigma(K)| \leq 2u(K) \leq tr(K) \leq tr(D)$, we can conclude $tr(K) = 2u(K)$.

This completes the proof.

\begin{figure}[htbp]
 \centering 
 \includegraphics[width=3.5cm, clip]{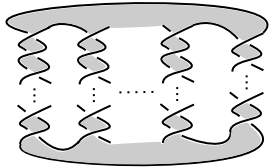}
 \caption{An example of checkerboard coloring}
 \label{fig:6-03ji}
 \end{figure}

\end{proof}

\addcontentsline{toc} {section} {References}

\end{document}